\pgfplotsset{compat=newest}
\newcounter{dfn}
\newcounter{example}
\newcounter{lem}
\newcounter{thm}
\newtheorem{lemma}{Lemma}
\newtheorem{prop}{Proposition}
\theoremstyle{definition}
\theoremstyle{remark}
\newtheorem*{remark}{Remark}
\theoremstyle{remark}
\newtheorem{claim}{Claim}
\theoremstyle{remark}
\newcommand{\addresseshere}{%
  \enddoc@text\let\enddoc@text\relax
}
\newcommand{\RR}{\mathbb{R}}
\newcommand{\ZZ}{\mathbb{Z}}
\newcommand{\K}{K_{3,3}}
\begin{document}

\title{Toroidal embeddings of cubic projective plane obstructions}
\author{Marie Kramer}
\address{Department of Mathematics, Syracuse University, Syracuse, NY USA}
\email{mkrame04@syr.edu}
\begin{abstract}
    Work of Glover and Huneke shows that a cubic graph embeds into the real projective plane if and only if it does not contain one of six topological minors called cubic projective plane obstructions. Here we classify up to equivalence the embeddings of these graphs into the torus.
\end{abstract}

\begingroup
\let\clearpage\relax
\maketitle
\section*{Introduction}
\label{sec:intro}

Kuratowski's Theorem is a classical result: A graph is planar if and only if it does not contain $K_5$ or $\K$ as a topological minor, that is, a subgraph isomorphic to a subdivision $K_5$ or $\K$ \cite{Kuratowski30}. We say $K_5$ and $\K$ are the topological obstructions for the plane.

Via stereographic projection, embeddability into the plane is equivalent to embeddability into the $2$-sphere. Hence, $\{K_5, \K\}$ is the complete set of topological obstructions for the orientable surface of genus zero. This raises the question of embedding these graphs into other surfaces. Gagarin, Kocay, and Neilson have shown that up to equivalence there are six embeddings of $K_5$ into the torus and two embeddings of $\K$ \cite{GagarinKocayNeilson03}. Similarly, Mohar proved that up to equivalence there are two embeddings of $K_5$ into the projective plane and one embedding of $\K$ \cite{Mohar93}.

In general, it is easy to see that obstructions for the orientable surface of genus $n$ embed into the orientable surface of genus $n+1$. Besides some elementary observations (see for example \textsection 4.4 in \cite{GraphsSurfaces}), there seems to be less known about the relation between embeddability into orientable surfaces and nonorientable surfaces. One result in this area due to Fiedler, Huneke, Richter, and Robertson shows how to compute the orientable genus for graphs embeddable into the projective plane \cite{FiedlerHunekeRichterRobertson95}.

Work by Archdeacon, Glover, Huneke, and Wang gives an analogue to Kuratowski's Theorem for the projective plane: There are 103 topological obstructions for the projective plane, six of which are cubic \cite{GloverHuneke75}, \cite{GloverHunekeWang79}, \cite{Archdeacon81}.

\begin{figure}[H]
    \centering
    \includegraphics[height=1.7cm]{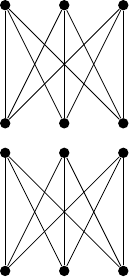} \hspace{.1cm}
    \includegraphics[height=1.6cm]{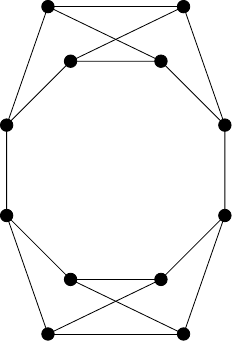} \hspace{.1cm}
    \includegraphics[height=1.6cm]{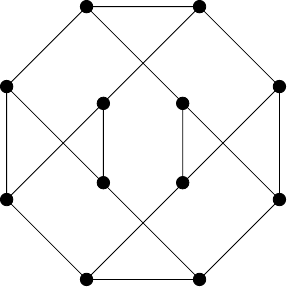} \hspace{.1cm}
    \includegraphics[height=1.6cm]{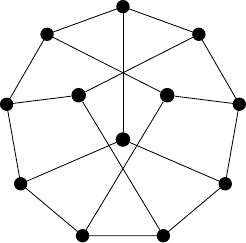} \hspace{.1cm}
    \includegraphics[height=1.6cm]{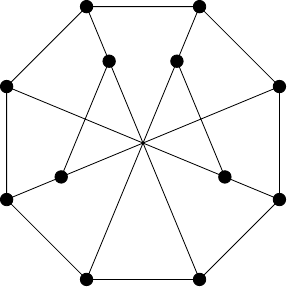} \hspace{.1cm}
    \includegraphics[height=1.6cm]{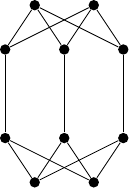} 
    \caption{The six cubic projective plane obstructions}
    \label{fig:6CubicRP2}
\end{figure}

In this paper, we investigate the embeddability of the six cubic projective plane obstructions into the torus. We will refer to these by $E_{42}, \ F_{11}, \ F_{12}, \ F_{13}, \ F_{14}, \ G_{1}$ following the notation used in \cite{GloverHunekeWang79} and \cite{GraphsSurfaces}.

\begin{restatable}{thmx}{DistinctEmbeddings}
    \label{thm:ProjectiveObstructionsInTorus}
    The graph $E_{42}$ does not embed into the torus, the graph $F_{12}$ embeds into the torus in exactly four ways up to equivalence, and each of the remaining four cubic projective plane obstructions embeds into the torus in exactly two ways up to equivalence.
\end{restatable}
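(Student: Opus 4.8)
The plan is to translate the statement into the language of rotation systems and then, for each of the six graphs, to count the rotation systems that realize the torus, organizing that count by the action of the automorphism group and orientation reversal.

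First I would recall that a cellular embedding of a graph into an orientable surface is encoded, up to homeomorphism, by a rotation system: a cyclic ordering of the edges incident to each vertex, with faces recovered by the standard face-tracing rule and the genus read off from Euler's formula. Two embeddings are equivalent precisely when a graph automorphism carries one rotation system to the other, possibly after reversing every local rotation, the reflection accounting for a change of orientation. Because each obstruction here is cubic, every vertex admits exactly two rotations, so a graph on $V$ vertices has $2^V$ rotation systems; this finiteness is what makes an exhaustive analysis possible in principle. I would then extract the sharp numerical constraint forced by the torus: for a cubic graph $2E = 3V$, so the torus requires exactly $F = E - V = V/2$ faces, and since the face lengths sum to $2E = 3V$ the faces have average length $6$. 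More generally a rotation system of genus $g$ has $F = V/2 + 2 - 2g$ faces, so $F \equiv V/2 \pmod 2$. Consequently, to prove that $E_{42}$ misses the torus it suffices to show that no rotation system on $E_{42}$ produces $V/2$ faces: the parity relation then forces $F \le V/2 - 2$, i.e. orientable genus at least $2$. I would establish this either by an averaging argument, in which the average-length-$6$ requirement together with $\girth(E_{42})$ restricts the admissible face-length vectors so severely that none is attainable, or, if $E_{42}$ decomposes across a small cut into pieces each of genus $1$, by additivity of genus over such amalgamations.

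For the positive counts I would avoid the full $2^V$ search by an embedding-extension argument anchored on a fixed non-planar subgraph $H$ common to these graphs, for which the toroidal embeddings are already understood; the natural choice is a subdivision of $\K$, whose two toroidal embeddings are recorded above. Deleting the edges of $G$ outside $H$ turns any toroidal embedding of $G$ into an embedding of $H$ in the torus, so every toroidal $G$-embedding arises by reinserting the missing edges and vertices into the faces of one of the finitely many $H$-embeddings. Within each face the insertion is constrained like a planar routing problem, since a family of arcs can be drawn disjointly in a disk face exactly when their endpoints are non-crossing along the boundary cycle; enumerating the non-crossing completions, face by face, gives a bounded list of candidate rotation systems for $G$. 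I would then trace faces to confirm genus $1$ and finally collapse the list under the automorphism group and reflection to obtain the equivalence classes, expecting the extra flexibility of $F_{12}$, whether a larger automorphism group or more admissible completions, to be what yields four classes rather than two.

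The hard part will be guaranteeing completeness and correctness of this casework rather than any single deduction. Three points demand care. The induced $H$-embedding need not be cellular, so non-disk faces, arising as annular regions of the torus, must be handled separately from the clean planar-routing case. The reinsertion enumeration must be proved exhaustive, not merely plausible. And the final quotient by automorphisms-with-reflection must avoid both double-counting distinct-looking but equivalent embeddings and conflating genuinely inequivalent ones. Pinning the totals to exactly four and exactly two hinges entirely on making this bookkeeping airtight.
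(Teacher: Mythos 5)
Your strategy for the five embeddable graphs is essentially the paper's: fix a subdivision $H$ of $\K$ inside each graph, use the known classification of toroidal embeddings of $\K$, enumerate the ways the remaining edges can be routed through the faces, and then identify the resulting embeddings up to equivalence. The paper's one organizational difference is that it performs the quotient by symmetry \emph{up front} rather than at the end: it classifies \emph{decorated} embeddings of $\K$ into the torus (embeddings with a distinguished directed edge, undirected edge, or $4$-cycle --- Lemmas \ref{lem:ClassDirEdgeK33}--\ref{lem:ClassCycleK33}), where the decoration records exactly the data determining how the rest of the graph attaches; this collapses the $18$ (or $9$) labelled starting configurations to $5$ or $6$ cases before any extension is attempted, which is what keeps the hand casework feasible. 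Your worry about non-cellular faces of the induced $H$-embedding is moot here: both toroidal embeddings of $\K$ are cellular, so every face is a disk and the routing of the new edges is governed by the Jordan curve argument (Proposition \ref{lem:JCT_consequenc}); the residual bookkeeping (your third concern) is handled in the paper by decomposing each candidate embedding into its facial polygons and exhibiting an explicit re-gluing onto one of the reference embeddings.

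The genuine gap is in your treatment of $E_{42}$. You appear not to have used that $E_{42}=\K \sqcup \K$ is \emph{disconnected}, and this breaks your primary argument in two ways. First, the rotation-system/Euler-formula framework as you state it presumes a connected graph with a cellular embedding; for a disconnected graph the face count $F=E-V+2-2g$ does not apply to an embedding into a single connected surface, so ``no rotation system produces $V/2$ faces'' is not the right target. Second, even setting that aside, the averaging argument cannot close: $E_{42}$ has $V=12$, $E=18$, girth $4$, and the torus would require $6$ faces of total length $36$; face-length vectors such as $(4,4,4,8,8,8)$ satisfy every constraint you impose, so girth plus averaging yields no contradiction. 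Your fallback --- additivity of genus over the decomposition into two genus-one pieces --- is the correct route (it is the Battle--Harary--Kodama theorem, cited in the paper's remark after Lemma \ref{lem:E42Torus}), but you would need to invoke it as a theorem rather than derive it. The paper's own argument is more elementary still: every face of either toroidal embedding of $\K$ is an open disk, so the second copy of $\K$ would have to embed in the plane, contradicting Kuratowski.
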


We refer to the Appendix for illustrations of the embeddings.
Theorem \ref{thm:ProjectiveObstructionsInTorus} is proved in Lemma \ref{lem:E42Torus} for the case of $E_{42}$ and in Sections \ref{sec:F11Lemma} -- \ref{sec:G1Lemma} for the other five graphs.

Efforts have been made to obtain a complete list of obstructions for the torus, but no such list has been found yet. However, there are a number of partial results. There are currently over 250,000 known topological torus obstructions \cite{MyrvoldWoodcock18}. Chambers has investigated cubic torus obstructions with the aid of computers \cite{Chambers02}, and it is believed that his list of 206 cubic obstructions is complete \cite{MyrvoldWoodcock18}. 
One application of Theorem \ref{thm:ProjectiveObstructionsInTorus} is to provide an independent proof that this list is indeed complete for small Betti numbers. This will be the subject of future work.

Further motivation for this work comes from an application to Riemannian geometry discovered by Kennard, Wiemeler and Wilking in \cite{KennardWiemelerWilking22}. More specifically, if a graph embeds into a surface, one obtains bounds on its systole. These systole bounds give bounds on matroid theoretic invariants, which in turn give geometric bounds for special tours representations. It turns out that cubic graphs represent extreme cases in the context of systole bounds and are therefore of high interest. 
While we restrict our attention to cubic graphs in this paper, we imagine a similar strategy would work to classify the embeddings into the torus of the remaining topological projective plane obstructions.

\textbf{Acknowledgements.} The author is grateful for support from NSF Research Grants DMS-2005280 and DMS-2402129, as well as from the Syracuse University Graduate School through Pre-Dissertation and Dissertation Fellowships in Summer 2023 and Summer 2024, respectively. The author would like to thank her advisor Dr. Lee Kennard for the invaluable advice, continued support, and stimulating conversations.
\endgroup

\section{Preliminaries}
\label{sec:prelim}

Unless otherwise specified $G=(V,E)$ denotes an undirected, simple graph.
Most of the graphs we work with are \textbf{\textit{cubic}}, that is, each of their vertices has degree three.

An \textbf{\textit{embedding}} of a graph $G$ into a surface $\Sigma$ is a function $\varphi$ mapping the vertices of $G$ to points in $\Sigma$, and the edges of $G$ to continuous curves in $\Sigma$, such that curves representing distinct edges intersect only at images of vertices (see \textsection 15.1 in \cite{GraphsAlgosAndOpti}). We denote an embedding by $\varphi: G \hookrightarrow \Sigma$.
A \textbf{face} of an embedding is a connected component of $\Sigma \setminus \varphi(G)$.
A \textbf{\textit{facial cycle}} or \textbf{\textit{facial walk}} is an oriented cycle or walk in $G$ along the boundary of a face of $\varphi(G)$.

A graph $G$ is a \textbf{\textit{(topological) obstruction}} for a surface $\Sigma$ if $G$ does not embed into $\Sigma$ but every (topological) minor does. Here, a \textbf{\textit{topological minor}} is a graph obtained from $G$ by deleting edges and/or vertices and contracting edges with an endpoint of degree two. In other words, a graph is a topological minor of $G$ if it has a  subdivision isomorphic to a subgraph of $G$. A graph is a \textbf{\textit{cubic obstruction}} for a surface $\Sigma$ if it is cubic and a topological obstruction for $\Sigma$.
We observe that if $G$ and $H$ are cubic graphs, $H$ is a topological minor of $G$ if and only if $H$ is a minor of $G$. Hence, restricted to the class of cubic graphs, topological obstructions and obstructions are equivalent.
 
Two embeddings $\varphi_1$ and $\varphi_2$ of a graph $G$ into a surface $\Sigma$ are \textbf{\textit{equivalent}} if there exists a homeomorphism $h: \Sigma \to \Sigma$ such that $\varphi_1(G) = (h \circ \varphi_2)(G)$ (see \textsection 15.2 in \cite{GraphsAlgosAndOpti}). That is, there exists a homeomorphism mapping the image of $G$ under $\varphi_2$ to the image of $G$ under $\varphi_1$.
Since any homeomorphism of a surface maps facial walks to facial walks, two embeddings with facial walks of different lengths cannot be equivalent.

\section{Embeddings of \texorpdfstring{$K_{3,3}$}{K\_{3,3}} into the torus}
\label{sec:embedK33}

It is well known that $K_{3,3}$ is the only cubic obstruction for the plane, as shown by Kuratowski \cite{Kuratowski30}. It is also known that the embedding of $K_{3,3}$ into the projective plane is unique up to equivalence \cite{Mohar93}. Similarly, $K_{3,3}$ embeds into the torus, and there are exactly two such embeddings up to equivalence \cite{GagarinKocayNeilson03}. Observe that the embedding on the left in Figure \ref{fig:K33InTorus} has two facial $4$-cycles and one facial $10$-cycle, while the embedding on the right has three facial $6$-cycles. 
    \begin{figure}[H]
        \centering
        \includegraphics[height=2cm]{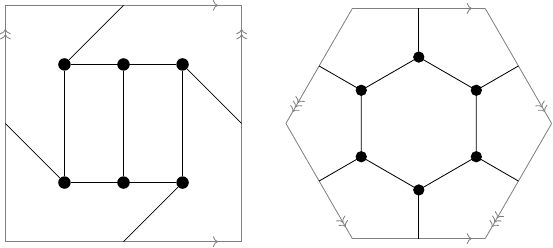}
        \caption{$K_{3,3}$ in the torus}
        \label{fig:K33InTorus}
    \end{figure}

Notice that each face of the toroidal embeddings of $K_{3,3}$ is homeomorphic to the plane, so the proof of Theorem \ref{thm:ProjectiveObstructionsInTorus} for the graph $E_{42}$ follows immediately.

\begin{lemma}
    \label{lem:E42Torus}
    There are no embeddings of $E_{42}$ into the torus.
\end{lemma}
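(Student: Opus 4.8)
The plan is to exploit the fact just recorded, that in both toroidal embeddings of $\K$ every face is an open disk. First I would locate a subdivision of $\K$ inside $E_{42}$. Since $E_{42}$ is a projective plane obstruction it does not embed into $\RP^2$, hence it is non-planar; and as a cubic graph it cannot contain a subdivision of $K_5$, whose branch vertices would each need degree four. By Kuratowski's Theorem it must therefore contain a subdivision $H$ of $\K$. Fixing such an $H$, I would argue by contradiction: suppose $\varphi \colon E_{42} \hookrightarrow \Sigma$ is an embedding into the torus $\Sigma$.

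Restricting $\varphi$ to $H$ and suppressing the degree-two subdivision vertices yields an embedding of $\K$ into $\Sigma$. Up to the homeomorphism in the definition of equivalence, this is one of the two embeddings of Figure \ref{fig:K33InTorus}, and in either case every face is a disk. The part of $E_{42}$ not used by $H$ --- a collection of extra vertices and edges --- must then be drawn by $\varphi$ inside these faces; because each face is planar, every extra edge lies in a single facial disk, and the whole leftover subgraph is realized disk-by-disk with its $H$-vertices pinned on the corresponding boundary walks. Thus a toroidal embedding of $E_{42}$ amounts to one of the two $\K$-patterns of Figure \ref{fig:K33InTorus} together with a planar, crossing-free placement of this leftover subgraph into the three facial disks.

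The heart of the argument is to show that no such placement exists, for either pattern and for every choice of the subdivision $H$. I would track where the branch and subdivision vertices of $H$ sit along the facial walks and ask whether the leftover edges can be routed without crossings. Two leftover edges whose endpoints interleave along the boundary of a single facial disk cannot both be drawn there, and a leftover edge whose two endpoints never share a face (in either pattern) cannot be drawn at all. Concretely, I expect to exhibit in $E_{42}$ a small set of leftover edges that, relative to the two toroidal $\K$-patterns, always forces one of these two failures --- equivalently, forces a copy of $\K$ or $K_5$ to lie inside a single disk.

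The main obstacle I anticipate is bookkeeping rather than any single hard idea: $E_{42}$ may contain several distinct subdivisions of $\K$, and for each one I must check both toroidal $\K$-patterns and all admissible distributions of the leftover edges among the at most three faces. Organizing this so that every branch visibly produces a crossing, rather than leaving a gap, is the delicate part. It is precisely the disk-face observation that collapses the toroidal question --- which a priori allows routing through the handle --- to this finite, purely planar check, which is why the conclusion follows quickly once that observation is in hand.
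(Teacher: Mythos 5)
Your framework---locate a subdivision $H$ of $\K$ inside $E_{42}$, observe that both toroidal embeddings of $\K$ in Figure \ref{fig:K33InTorus} have only disk faces, and then show that the leftover part of the graph cannot be placed into those disks---is sound, and it is essentially the skeleton of the paper's argument. However, as written the proof is incomplete: you explicitly defer the ``heart of the argument'' to an anticipated bookkeeping check over interleaving endpoints along facial walks and over all admissible distributions of leftover edges, and you never carry that check out or exhibit the ``small set of leftover edges'' that is supposed to force a crossing.

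The reason no such bookkeeping is needed is a structural fact about $E_{42}$ that your proposal does not use: $E_{42}$ is the disjoint union of two copies of $\K$. Taking $H$ to be one of the two components, the leftover subgraph is the entire second copy of $\K$, which shares no vertices with $H$ at all. A connected subgraph disjoint from $H$ must be drawn entirely inside a single face of the embedding of $H$, and every such face is an open disk; since $\K$ is nonplanar, this is impossible. In particular there are no attachment vertices to track on boundary walks, no interleaving analysis, and no case distinction over the choice of $H$ beyond the single observation that all faces are disks. This is exactly the paper's proof. Without identifying $E_{42}$ as $\K \sqcup \K$ (or giving some equally concrete description of the leftover edges), your plan stops precisely where the argument has to begin, so as it stands there is a genuine gap.
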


\begin{proof}
    Recall that $E_{42}=K_{3,3} \sqcup K_{3,3}$. As seen in \cite{GagarinKocayNeilson03}, there are exactly two inequivalent embeddings of $K_{3,3}$ into the torus, pictured in Figure \ref{fig:K33InTorus}. Therefore, one copy of $K_{3,3}$ is embedded into the torus in one of those two ways and the second copy has to be embedded into one of the faces.
    In both cases, each face of the embedding is homeomorphic to a disk. As $K_{3,3}$ is nonplanar by \cite{Kuratowski30}, it follows that $E_{42}$ does not embed into the torus.
\end{proof}

\begin{remark}
    Alternatively, it follows from \cite{BattleHararyKodama62} that $E_{42}$ is one of the disconnected torus obstructions. Note that this implies that $E_{42}$ embeds into the double torus, the orientable surface of genus two.
\end{remark}

In order to prove Theorem \ref{thm:ProjectiveObstructionsInTorus} for the other five cubic projective plane obstructions, we use a classification of (directed) edged embeddings of $\K$ into the torus. By an \textit{edged embedding}, we mean an embedding of $\K$ with one highlighted edge. In a \textit{directed edged embedding}, the highlighted edge has an orientation. We will first prove a statement about the number of directed edged embeddings. The analogous statement about the number of edged embeddings will then follow immediately.

\begin{lemma}
        \label{lem:ClassDirEdgeK33}
        There are exactly six inequivalent directed edged embeddings of $\K$ into the torus.
        \begin{figure}[H]
            \centering
            \includegraphics[height=2cm]{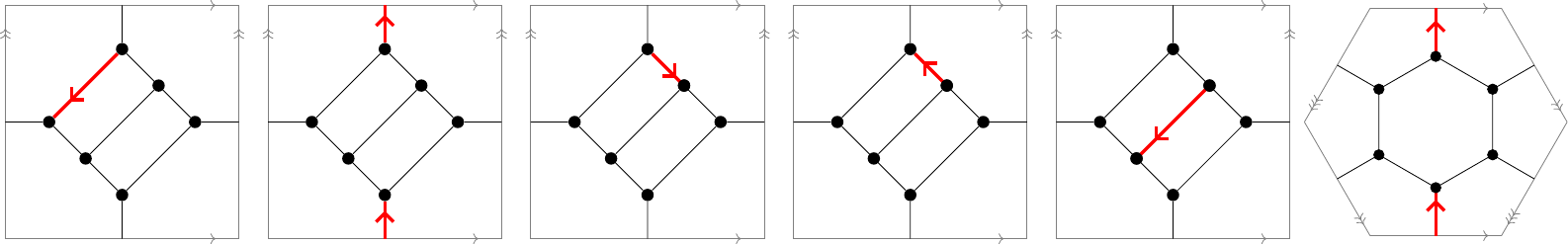}
            \caption{The six directed edged embeddings $(\K, \vec e) \hookrightarrow T^2$}
            \label{fig:DirEdgedK33InT}
        \end{figure}
    \end{lemma}

    \begin{proof}
        We will start by showing that there are at most six equivalence classes of directed edged embeddings of $\K$ into the torus.
        Note that $\K$ has nine edges, so there are eighteen ways to select and orient an edge. However, we will see that many of these choices result in equivalent embeddings.

        First, consider the embedding of $\K$ into the torus pictured on the left in Figure \ref{fig:K33InTorus}. We will use the following two homeomorphisms of the torus to prove the below claims: A rotation $\rho$ by $180^\circ$, and a reflection $r$ along the diagonal from the bottom left to the top right. Note that these operations map the image of $\K$ to the image of $\K$. Embeddings related by these operations are thus equivalent according to the definition in Section \ref{sec:prelim}.
        
        \begin{claim}
            \label{claim:DirEdg1}
            The four directed edged embeddings $(\K, \vec e) \hookrightarrow T^2$ shown below are equivalent.
            \begin{figure}[H]
                \centering
                \includegraphics[height=2cm]{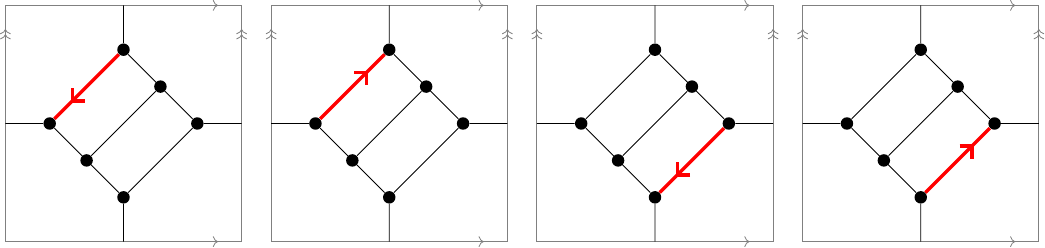}
            \end{figure}
            \textit{Proof. \hspace{.05cm}}
            The first and fourth embedding, and the second and third one are related by a rotation by $180^\circ$, respectively. The first and third embedding, and the second and fourth one are related by the above mentioned reflection $r$, respectively.
            $\blacksquare$
        \end{claim}

        Arguing similarly, we easily get Claims \ref{claim:DirEdg2} -- \ref{claim:DirEdg5} below.

        \begin{claim}
            \label{claim:DirEdg2}
            The four directed edged embeddings $(\K, \vec e) \hookrightarrow T^2$ shown below are equivalent.
            \begin{figure}[H]
                \centering
                \includegraphics[height=2cm]{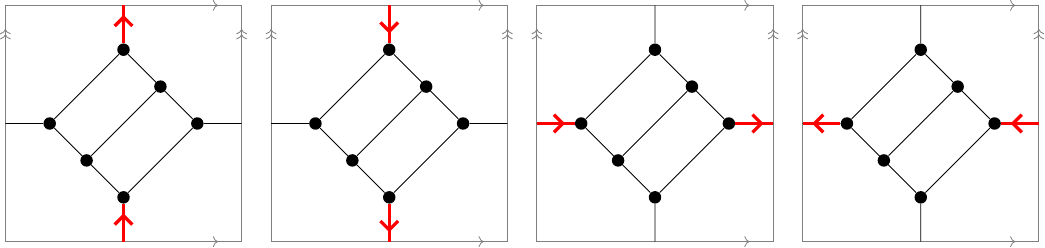}
            \end{figure}
        \end{claim}

        \begin{claim}
            \label{claim:DirEdg3}
            The four directed edged embeddings $(\K, \vec e) \hookrightarrow T^2$ shown below are equivalent.
            \begin{figure}[H]
                \centering
                \includegraphics[height=2cm]{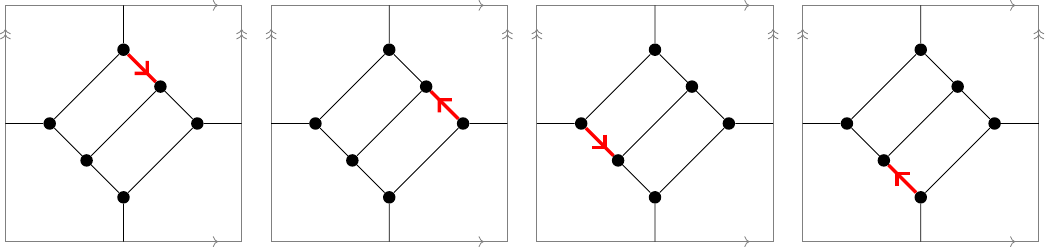}
            \end{figure}
        \end{claim}

        \begin{claim}
            \label{claim:DirEdg4}
            The four directed edged embeddings $(\K, \vec e) \hookrightarrow T^2$ shown below are equivalent.
            \begin{figure}[H]
                \centering
                \includegraphics[height=2cm]{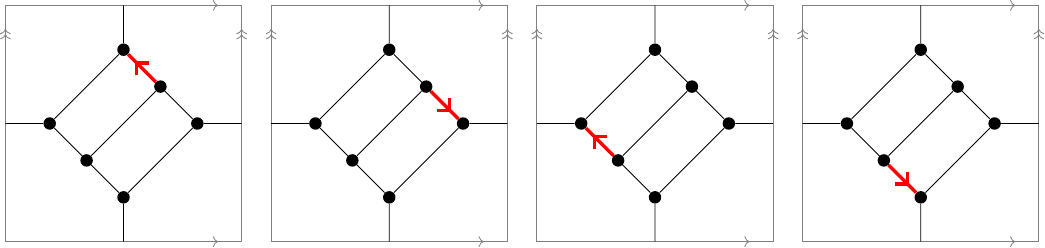}
            \end{figure}
        \end{claim}

        \begin{claim}
            \label{claim:DirEdg5}
            The two directed edged embeddings $(\K, \vec e) \hookrightarrow T^2$ shown below are equivalent.
            \begin{figure}[H]
                \centering
                \includegraphics[height=2cm]{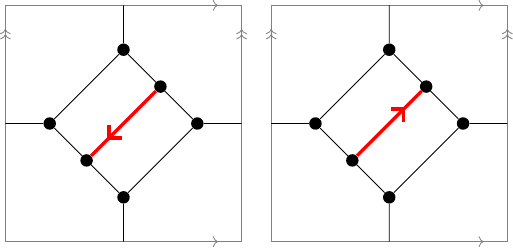}
            \end{figure}
        \end{claim}

        \begin{remark}
            Note that the automorphism group of $\K$ is $(S_3 \times S_3) \rtimes \ZZ_2$, where $\ZZ_2$ acts by interchanging the $S_3$ factors.
            The homeomorphisms $\rho$ and $r$ of the embedding of $\K$ in the left of Figure \ref{fig:K33InTorus} induce a subgroup of order four of the automorphism group of $\K$.
            More specifically, the group generated by $\rho$ and $r$ is of the from $\langle \rho, r \ \vert \ \rho^2=r^2=e, \rho r \rho^{-1} = r\rangle \cong \ZZ_2 \times \ZZ_2$.
            Lastly, Claims \ref{claim:DirEdg1} -- \ref{claim:DirEdg5} give that the action of $\ZZ_2 \times \ZZ_2$ on the 18-element set of oriented edged embeddings of $\K$ into the torus has four orbits of size four, and one orbit of size two.
        \end{remark}

         Next, we consider the embedding of $\K$ into the torus pictured on the right in Figure \ref{fig:K33InTorus}. We will use the following two homeomorphisms of the torus to prove the claim below: A counterclockwise rotation by $60^\circ$, and the operation $\varepsilon$ obtained by cutting the torus into six triangular regions and pasting them back together as pictured below. Note that both of these operations map the the image of $\K$ to the image of $\K$.
        \begin{center}
            \includegraphics[height=2.5cm]{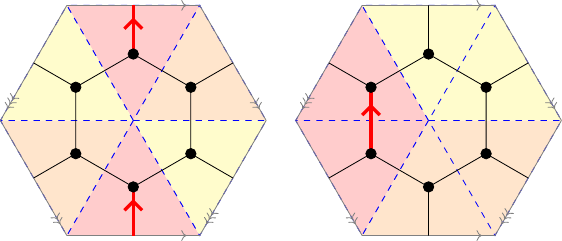}
        \end{center} 
        
        \begin{claim}
            \label{claim:DirEdg6}
            All 18 directed edged embedding $(\K, \vec e) \hookrightarrow T^2$ with underlying embeddings as on the right of Figure \ref{fig:K33InTorus} are equivalent.\\
            \textit{Proof. \hspace{.05cm}} The homeomorphism $\varepsilon$ shows the equivalence of a directed edged embedding with $\vec e$ being one of the six edges of the central hexagon to one with $\vec e$ being one of the other three edges. Moreover, in composition with counterclockwise rotations by $60^\circ$, $\varepsilon$ can be used to reverse the direction of a directed edge in the central hexagon. Finally, repeated application of counterclockwise rotations by $60^\circ$ gives the remaining desired equivalences.
            $\blacksquare$ 
        \end{claim}

        \begin{remark}
            These homeomorphisms induce automorphisms of $\K$. The subgroup of automorphisms that can be realised by this embedding of $\K$ into the torus is isomorphic to $S_3 \times S_3$, however it is not the the standard such subgroup $(S_3 \times S_3) \times 1 \leqslant (S_3 \times S_3) \rtimes \ZZ_2$.
        \end{remark}

         Combining Claims \ref{claim:DirEdg1} -- \ref{claim:DirEdg6} gives that there are at most six inequivalent embeddings. It remains to show that the embeddings pictured in Figure \ref{fig:DirEdgedK33InT} are pairwise inequivalent.
         Since Embedding 6 is the only one with facial $6$-cycles and homeomorphisms of the torus map facial cycles to facial cycles, Embedding 6 cannot be equivalent to any of the other embeddings.
         Observe that in Embeddings 1 through 5 there are exactly two edges that appear in the facial $10$-cycle twice. Such an edge has to get mapped to such an edge under homeomorphisms of the torus. In particular, Embedding 2 cannot be equivalent to any of the other four remaining ones.
         Similarly, in Embeddings 1 through 5 there exists a unique edge separating the two facial $4$-cycles. Any homeomorphism of the torus has to map this edge to itself. This implies in particular that Embedding 5 cannot be equivalent to any of the other three remaining embeddings.
         It remains to compare Embeddings 1, 3, and 4. In Embedding 1, neither the starting point nor the endpoint $\vec e$ are part of both facial $4$-cycle. In Embedding 3, the endpoint of $\vec e$ is part of both facial $4$-cycles, and in Embedding 4 the starting point of $\vec e$ is part of both facial $4$-cycles. This implies that Embeddings 1, 3, and 4 are pairwise inequivalent, finishing the proof.
    \end{proof}

\begin{lemma}
    \label{lem:ClassEdgedK33}
    There are exactly five inequivalent edged embeddings of $\K$ into the torus.
    \begin{figure}[H]
            \centering
            \includegraphics[height=2cm]{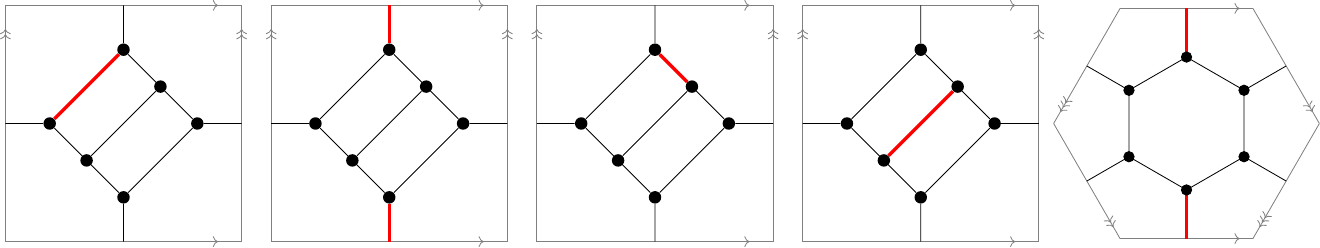}
            \caption{The five edged embeddings $(\K, e^\ast) \hookrightarrow T^2$}
            \label{fig:EdgedK33InT}
        \end{figure}
\end{lemma}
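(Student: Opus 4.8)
The plan is to deduce this classification directly from Lemma \ref{lem:ClassDirEdgeK33} by analyzing the forgetful map that sends a directed edged embedding to the edged embedding obtained by ignoring the orientation of the highlighted edge. This map descends to equivalence classes, since an orientation-preserving equivalence of directed edged embeddings is a fortiori an equivalence of the underlying edged embeddings. Let $D$ denote the set of six equivalence classes of directed edged embeddings from Lemma \ref{lem:ClassDirEdgeK33}, let $U$ denote the set of equivalence classes of edged embeddings we wish to count, and let $\iota \colon D \to D$ be the orientation-reversal involution $[(\K, \vec e)] \mapsto [(\K, \overleftarrow{e})]$. The first step is to verify that the fiber of the forgetful map over a class in $U$ is exactly $\{[d], \iota[d]\}$: any equivalence of underlying edged embeddings either preserves or reverses the highlighted edge, so it must carry the directed class to $[d]$ or to $\iota[d]$. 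Hence $U$ is in bijection with the orbit set $D/\langle \iota \rangle$.

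Since $\iota$ is an involution on a six-element set, the number of orbits equals $(6+f)/2 = 3 + f/2$, where $f$ is the number of fixed points, so it suffices to show $f=4$, i.e. that exactly four of the six directed edged embeddings are equivalent to their own reversals while the remaining two are interchanged by $\iota$. The second step is to read off this fixed-point data from the invariants already isolated in the proof of Lemma \ref{lem:ClassDirEdgeK33}. For Embedding 6 this is immediate from Claim \ref{claim:DirEdg6}, which shows that all directed edged embeddings with the hexagonal underlying embedding are equivalent, so $\iota$ fixes it. For Embedding 5 the highlighted edge is the unique edge separating the two facial $4$-cycles, and its size-two orbit (Claim \ref{claim:DirEdg5}) consists precisely of that single edge with both orientations, so $\iota$ fixes Embedding 5. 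For Embedding 2 the highlighted edge is one of the two edges traversed twice by the facial $10$-cycle; its size-four orbit contains both orientations of both such edges, so $\iota$ fixes it as well.

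The decisive comparison is among Embeddings 1, 3, and 4, which in Lemma \ref{lem:ClassDirEdgeK33} were separated by recording whether the starting point, the endpoint, or neither endpoint of $\vec e$ lies on both facial $4$-cycles. Reversing the orientation of $\vec e$ interchanges the roles of starting point and endpoint, hence carries Embedding 3 (endpoint on both $4$-cycles) to Embedding 4 (starting point on both $4$-cycles) and back, while it fixes Embedding 1, whose defining property—that neither endpoint lies on both $4$-cycles—is symmetric in the two endpoints. Thus $\iota$ has fixed points $\{1,2,5,6\}$ and the single transposition $(3\ 4)$, so $f=4$ and $|U| = 5$. The main obstacle is making this fixed-point determination rigorous rather than pictorial; the key realization that resolves it is that the start-versus-endpoint invariant separating Embeddings 3 and 4 is precisely the invariant toggled by orientation reversal, so the identification $3 \leftrightarrow 4$, together with the self-duality of the remaining four, is forced by the earlier classification rather than requiring any new symmetry arguments.
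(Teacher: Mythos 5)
Your proposal is correct and takes essentially the same route as the paper, whose proof is a one-line observation that the classification follows from Lemma \ref{lem:ClassDirEdgeK33} by forgetting the orientation, with Embeddings 3 and 4 becoming identical. Your orbit-counting formalization of the forgetful map and the explicit verification that the other four directed classes are fixed by orientation reversal simply supply details the paper leaves implicit.
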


\begin{proof}
    This follows directly from Lemma \ref{lem:ClassDirEdgeK33} by disregarding the orientation. In particular, Embeddings 3 and 4 in Figure \ref{fig:DirEdgedK33InT} are the same if we delete the orientation. 
\end{proof}

 In addition to (directed) edged embeddings, we will use the notion of a \textit{cycled embedding}. By this, we mean an embedding of $\K$ into the torus with a highlighted $4$-cycle.

\begin{lemma}
    \label{lem:ClassCycleK33}
    The are exactly five inequivalent cycled embeddings of $\K$ into the torus.
    \begin{figure}[H]
            \centering
            \includegraphics[height=2cm]{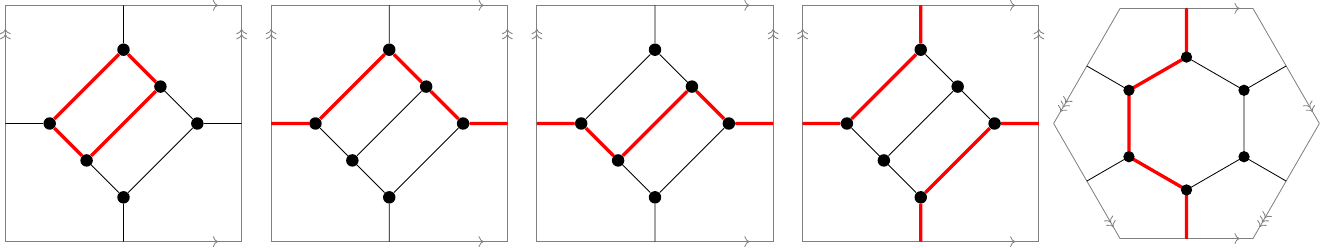}
            \caption{The five cycled embeddings $(\K, C_4) \hookrightarrow T^2$}
            \label{fig:CycledK33InT}
        \end{figure}
\end{lemma}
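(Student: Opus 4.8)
The plan is to mirror the strategy of Lemmas \ref{lem:ClassDirEdgeK33} and \ref{lem:ClassEdgedK33}: first bound the number of equivalence classes of cycled embeddings from above by exploiting the symmetries of the two underlying toroidal embeddings of $\K$, and then separate the five embeddings of Figure \ref{fig:CycledK33InT} using homeomorphism invariants. The starting observation is that $\K$ contains exactly nine $4$-cycles, one for each choice of a vertex to omit from each side of the bipartition, and that these form a single orbit under $\mathrm{Aut}(\K)$. Since any equivalence of cycled embeddings restricts to an equivalence of the underlying embeddings of $\K$, and since the two embeddings in Figure \ref{fig:K33InTorus} have facial walks of different lengths and are therefore inequivalent, I would count the cycled embeddings supported on each of the two underlying embeddings separately and add the two counts.

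For the embedding on the right of Figure \ref{fig:K33InTorus}, I would reuse the two homeomorphisms from Claim \ref{claim:DirEdg6}, namely the counterclockwise rotation by $60^\circ$ and the cut-and-paste operation $\varepsilon$. These generate the realized symmetry group, which is isomorphic to $S_3 \times S_3$ and has order $36$, an index-two subgroup of $\mathrm{Aut}(\K)$. I would show that this group acts transitively on the nine $4$-cycles: the stabilizer of a $4$-cycle in $\mathrm{Aut}(\K)$ has order $72/9 = 8$, so the intersection of this stabilizer with the realized group has order dividing $8$, while the resulting orbit has size $36/|{\cdot}|$, which must be an integer at most $9$; this forces the intersection to have order $4$ and the orbit to have size $9$. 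Alternatively one exhibits explicit symmetries carrying any chosen $4$-cycle to any other, exactly as in Claim \ref{claim:DirEdg6}. Either way, the hexagonal underlying embedding supports exactly one cycled embedding up to equivalence.

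For the embedding on the left of Figure \ref{fig:K33InTorus}, the realized symmetry group is $\langle \rho, r\rangle \cong \ZZ_2 \times \ZZ_2$ of order four. I would determine the orbits of this action on the nine $4$-cycles directly from the picture, distinguishing the two facial $4$-cycles, which bound the two quadrilateral faces, from the seven non-facial ones. Tracking how $\rho$ and $r$ permute the labeled $4$-cycles---or, as a consistency check, computing the orbit count by Burnside, where the three involutions together fix seven of the nine cycles---yields four orbits. Hence the left underlying embedding supports four cycled embeddings, and combining with the previous paragraph gives at most $1 + 4 = 5$ classes in total.

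It then remains to verify that the five embeddings in Figure \ref{fig:CycledK33InT} are pairwise inequivalent, which is where the main work lies. The cycled embedding coming from the hexagonal embedding is immediately separated from the other four by its facial walk lengths $(6,6,6)$. To separate the four arising from the $(4,4,10)$ embedding, I would use invariants preserved by homeomorphisms: whether the highlighted $4$-cycle is itself facial, that is, equal to one of the two quadrilateral faces, and, for the non-facial cycles, the incidence pattern of the cycle with the two quadrilateral faces and the decagonal face, for instance the number of edges or the number of vertices that the highlighted cycle shares with each quadrilateral face. I expect the principal obstacle to be pinning down the orbit structure on the left embedding precisely and choosing invariants fine enough to distinguish the non-facial cases from one another, rather than the routine transitivity argument on the hexagonal side.
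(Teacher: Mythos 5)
Your proposal is correct and follows essentially the same route as the paper: bound the count above by computing the orbits of the realized symmetry groups on the nine $4$-cycles (the group $\langle \rho, r\rangle \cong \ZZ_2\times\ZZ_2$ on the $(4,4,10)$-embedding giving four orbits, and the group generated by the $60^\circ$ rotation and $\varepsilon$ acting transitively on the hexagonal embedding), then separate the resulting five classes by face-incidence invariants. The differences are minor: you replace the paper's explicit cut-and-paste verification of transitivity on the hexagonal side with an orbit--stabilizer counting argument, and the paper's concrete distinguishing invariants (whether the highlighted cycle uses the unique edge separating the two facial $4$-cycles, or an edge traversed twice by the facial $10$-cycle) are precisely instances of the incidence data you propose, so your final separation step, though only sketched, goes through.
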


\begin{proof}
    Similar to the proof of Lemma \ref{lem:ClassDirEdgeK33}, we will start by showing that there are at most five equivalence classes of cycled embeddings of $\K$ into the torus.
    Note that there are nine distinct $4$-cycles in $K_{3,3}$: Let $V(\K)=V_1 \cup V_2$ be the partition of vertices of $\K$. Any $4$-cycle contains exactly two vertices from $V_1$ and two vertices from $V_2$. Thus, the total number of choices for the set of vertices of a $4$-cycle is $\binom{3}{2} \cdot \binom{3}{2} = 9$. Once the vertices of $C_4 \subset \K$ are fixed, the edges of $C_4$ are completely determined.

     First, we consider the embedding of $\K$ into the torus pictured on the left of Figure \ref{fig:K33InTorus}. Recall the homeomorphisms $\rho$, the rotation by $180^\circ$, and $r$, the reflection along the diagonal from the bottom left to the top right, used in the proof of Lemma \ref{lem:ClassDirEdgeK33}. The next three claims follow as in the proof of Lemma \ref{lem:ClassDirEdgeK33}, so we omit the proofs.

    \begin{claim}
        \label{claim:Cycled1}
        The two cycled embeddings $(\K, C_4) \hookrightarrow T^2$ shown below are equivalent.
        \begin{center}
            \includegraphics[height=2cm]{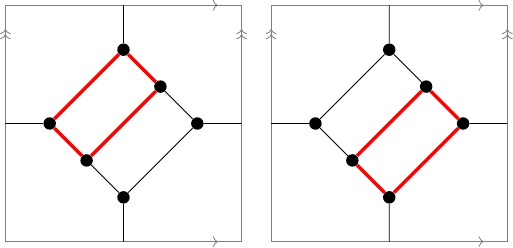}
        \end{center}
    \end{claim}

    \begin{claim}
        \label{claim:Cycled2}
        The four cycled embeddings $(\K, C_4) \hookrightarrow T^2$ shown below are equivalent.
        \begin{center}
            \includegraphics[height=2cm]{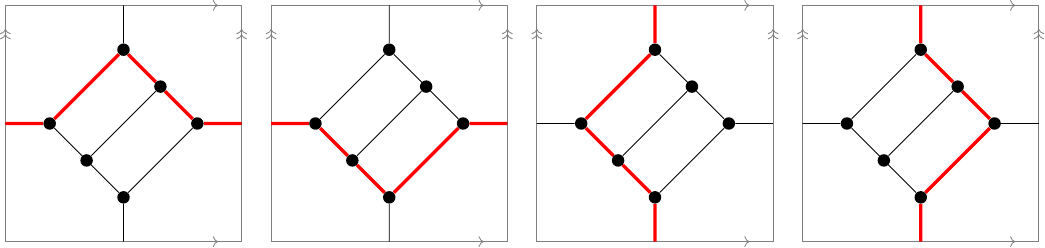}
        \end{center}
    \end{claim}

    \begin{claim}
        \label{claim:Cycled3}
        The two cycled embeddings $(\K, C_4) \hookrightarrow T^2$ shown below are equivalent.
        \begin{center}
            \includegraphics[height=2cm]{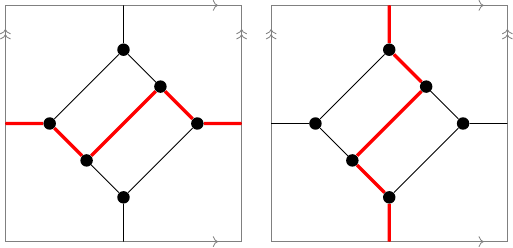}
        \end{center}
    \end{claim}

    \begin{remark}
        The action of $\ZZ_2 \times \ZZ_2$ on the 9-element set of cycled embeddings of $\K$ into the torus has two orbits of size two, one orbit of size four, and one orbit of size one.
    \end{remark}

     Next, we consider the embedding of $\K$ into the torus pictured on the right in Figure \ref{fig:K33InTorus}. As in the proof of Lemma \ref{lem:ClassDirEdgeK33}, we will use the counterclockwise rotation by $60^\circ$ and the homeomorphism $\varepsilon$.

    \begin{claim}
        \label{claim:Cycled5}
        All nine cycled embeddings $(\K, C_4) \hookrightarrow T^2$ shown below are equivalent.
        \begin{center}
            \includegraphics[width=.95\textwidth]{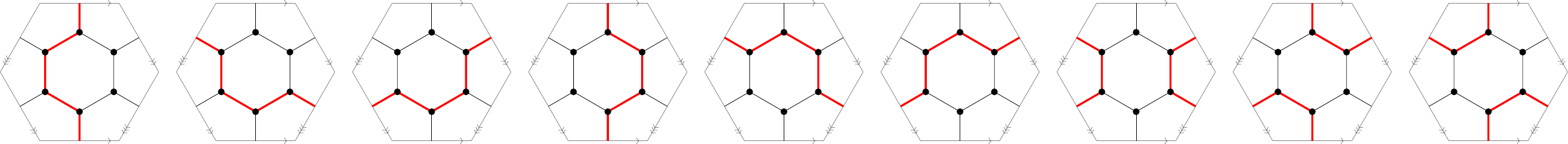}
        \end{center}
        \text{ } \textit{Proof. \hspace{.05cm}} Going from left to right, we can obtain the first six embeddings from the first one by a counterclockwise rotation by an integer multiple of $60^\circ$. 
        We can obtain the seventh embedding from the first one by an application of $\varepsilon$. 
        The final two embeddings can be obtained from the seventh one by a counterclockwise rotation by an integer multiple of $60^\circ$.
        Since equivalence of embeddings is an equivalence relation, the claim follows.
        $\blacksquare$
    \end{claim}

     Combining Claims \ref{claim:Cycled1} -- \ref{claim:Cycled5} implies that there are at most five inequivalent embeddings. It remains to show that the embeddings pictured in Figure \ref{fig:CycledK33InT} are pairwise inequivalent.
    Since Embedding 5 is the only one with facial $6$-cycles, Embedding 5 cannot be equivalent to any of the other embeddings.
    Observe that in Embeddings 1 through 4 there are exactly two edges that appear in the facial $10$-cycle twice. Such an edge has to get mapped to such an edge under homeomorphisms of the torus. In particular, as Embedding 4 is the only one whose highlighted $4$-cycle uses both of these edges, it cannot be equivalent to any of the other three remaining ones.
    Similarly, in Embeddings 1 through 4 there exists a unique edge separating the two facial $4$-cycles. Any homeomorphism of the torus has to map this edge to itself. This implies in particular that Embedding 2 cannot be equivalent to any of the other two remaining ones as the highlighted $4$-cycle in embedding 2 does not use that edge.
    It remains to compare Embeddings 1 and 3. In Embedding 3, the highlighted $4$-cycle uses an edge appearing twice in the facial $10$-cycle, whereas the $4$-cycle in Embedding 1 does not. This implies that Embeddings 1 and 3 are inequivalent, finishing the proof.
\end{proof}

\section{Toroidal embeddings of \texorpdfstring{$F_{11}$}{F\_{11}}}
\label{sec:F11Lemma}

\setcounter{claim}{0}

In the remaining sections, we use repeatedly the following consequence of the Jordan Curve Theorem. A proof of the below proposition can be found in  \textsection 2.2 in \cite{GraphsSurfaces}. We present a different proof here.

\begin{prop}
    \label{lem:JCT_consequenc}
    Let $C$ be a simple closed curve in $\RR^2$, $x, y \in C$, and $P: I \to \RR^2$ a continuous path from $x$ to $y$. Then $\RR^2 \setminus (C \cup P)$ has three components.
\end{prop}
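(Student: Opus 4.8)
The plan is to prove the intended statement, in which $P$ is a simple arc meeting $C$ only at its two endpoints $x\neq y$ (this is the case in which such a Jordan-curve consequence is applied, and without some such hypothesis the number of components can change). First I would apply the Jordan Curve Theorem to $C$ itself, writing $\RR^2\setminus C=I\sqcup E$ with $I=\operatorname{int}(C)$ bounded and $E=\operatorname{ext}(C)$ the unbounded region. Since $\mathring P:=P\setminus\{x,y\}$ is connected and disjoint from $C$, it lies entirely in one of $I,E$; I may assume $\mathring P\subseteq I$ (the other case is symmetric, for instance after passing to the sphere $\RR^2\cup\{\infty\}$, where $I$ and $E$ play interchangeable roles). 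The region $E$ will be one of the three components, so the real content is to show that $I\setminus\mathring P$ has exactly two components.

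Next I would build two auxiliary Jordan curves. The points $x,y$ cut $C$ into two arcs $C_1,C_2$ with $C_1\cap C_2=\{x,y\}$, and then $J_1=C_1\cup P$ and $J_2=C_2\cup P$ are simple closed curves to which the Jordan Curve Theorem applies, giving bounded interiors $D_1,D_2$ and unbounded exteriors $O_1,O_2$. The claim is that the three components are exactly $E,D_1,D_2$. The easy half is a sequence of accessibility observations: $E$ is connected, unbounded, and disjoint from each $J_i$, so $E\subseteq O_1\cap O_2$; every point of $\mathring C_2$ lies in $\overline E$ yet cannot lie in the open set $D_1$ (a neighbourhood would then avoid $E\subseteq O_1$), so $\mathring C_2\subseteq O_1$, and symmetrically $\mathring C_1\subseteq O_2$; hence each $D_i$ is disjoint from $C$ and, being bounded and connected, satisfies $D_i\subseteq I$. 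Using $C\cup P=J_1\cup\mathring C_2$ together with $\mathring C_2\subseteq O_1$ gives the clean splitting
\[
\RR^2\setminus(C\cup P)=D_1\ \sqcup\ (O_1\setminus\mathring C_2),
\]
with $D_1$ connected, and the symmetric identity with $D_2,O_2,\mathring C_1$. Comparing the two identities yields that $E,D_1,D_2$ are pairwise disjoint, connected, and open.

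The crux is to rule out any further component, that is, to show the bounded open ``leftover'' set $R:=I\cap O_1\cap O_2=(I\setminus\mathring P)\setminus(D_1\cup D_2)$ is empty; this is exactly the exhaustion $I\setminus\mathring P=D_1\sqcup D_2$. Since $\partial R\subseteq J_1\cup J_2=C\cup P$, if $R\neq\emptyset$ it has a frontier point $q\in C\cup P$, and I would derive a contradiction by a local analysis of $C\cup P$ near $q$. At an interior point of any one of the three arcs $C_1,C_2,P$, a small disk about $q$ is cut by the single arc through it into exactly two local sides, both already occupied by two of $E,D_1,D_2$; at a branch point $q\in\{x,y\}$ the three arc-germs cut a small disk into exactly three sectors, again occupied by $E,D_1,D_2$. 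In each case $R$ has no room to accumulate at $q$, so $\partial R=\emptyset$; a bounded open set with empty frontier is empty. Therefore $\RR^2\setminus(C\cup P)=E\sqcup D_1\sqcup D_2$ has exactly three components.

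The hard part will be precisely this last local step: justifying that a simple arc separates a small neighbourhood into exactly two pieces and that at $x$ and $y$ the three arcs meet trivalently with exactly three surrounding sectors. These local models are legitimate in the plane only because arcs and Jordan curves are tame (Jordan--Schoenflies), which supplies a bicollar for each arc; the global bookkeeping identifying $E,D_1,D_2$ is by contrast routine. An alternative that concentrates the same difficulty into one appeal is to invoke Jordan--Schoenflies at the outset to identify $(\overline I,C)$ with the standard closed disk and its boundary circle, after which everything reduces to the elementary statement that a boundary-to-boundary arc splits a disk into two regions.
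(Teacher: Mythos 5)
Your proposal follows essentially the same route as the paper's proof: apply the Jordan Curve Theorem to $C$ and then to the two auxiliary Jordan curves $C_1\cup P$ and $C_2\cup P$, identify the three regions as the exterior of $C$ together with the two new bounded interiors, and reduce the case $P\subseteq\operatorname{ext}(C)$ to the interior case via the sphere. The substantive difference is that you carry out the exhaustion step --- showing $E\sqcup D_1\sqcup D_2$ is \emph{all} of $\RR^2\setminus(C\cup P)$ by ruling out a leftover component $R$ through a local (bicollar/Schoenflies) analysis at frontier points --- whereas the paper stops after exhibiting the three nonempty regions, which only gives ``at least three.'' Your explicit restriction to $P$ a simple arc meeting $C$ only at its endpoints $x\neq y$ is also the correct reading of the hypothesis (the statement as literally written, for an arbitrary continuous path, is false), so your version is the more complete argument along the same lines.
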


\begin{proof}  
    By the Jordan Curve Theorem, $C=C_1 \cup C_2$ separates the plane into two components, called the interior and exterior. In particular, the interior is nonempty. If $P$ is on the interior of $C$, $P \cup C_1$ and $P \cup C_2$ are simple closed curves, and thus have a nonempty interior as well. Hence, we have the claimed three components.
        \begin{center}
            \includegraphics[height=3cm]{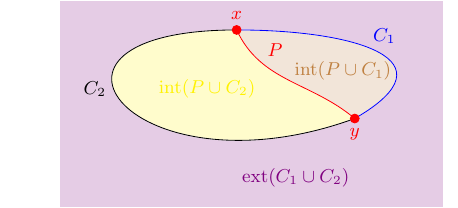}
        \end{center} 
    The case of $P$ being in the exterior of $C$ may be reduced to the just covered case by stereographic projection.
\end{proof}

\begin{lemma}
\label{lem:F11TorusEmbeddings}
    There are exactly two inequivalent unlabelled embedding of $F_{11}$ into the torus.
\end{lemma}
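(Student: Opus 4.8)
The plan is to exploit the fact that $F_{11}$ contains a subdivision of $\K$, so that any toroidal embedding of $F_{11}$ restricts to one of the two embeddings of $\K$ classified in Section \ref{sec:embedK33}. First I would fix an explicit copy of $\K$ inside $F_{11}$ and record the remaining edges and degree-two paths, writing $F_{11} = \K \cup H$, where $H$ is the ``extra'' structure joining two prescribed attachment points of the $\K$. The attachment data of $H$ is precisely what the edged and cycled embedding classifications (Lemmas \ref{lem:ClassEdgedK33} and \ref{lem:ClassCycleK33}) are built to track: restricting $\varphi$ to $\K$ with the attachment locus highlighted yields one of the five decorated embeddings in Figure \ref{fig:EdgedK33InT} or Figure \ref{fig:CycledK33InT}, depending on whether $H$ attaches along an edge or a $4$-cycle. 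This reduces the problem from infinitely many embeddings to a finite list of ``frames.''

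Next, for each of these finitely many decorated $\K$-embeddings I would determine in how many ways $H$ can be completed to a crossing-free toroidal embedding of $F_{11}$. Since both embeddings of $\K$ into the torus are cellular with every face homeomorphic to a disk (as observed after Figure \ref{fig:K33InTorus}), the curves of $H$ must be drawn inside a single face, with endpoints on that face's boundary. Here Proposition \ref{lem:JCT_consequenc} is the workhorse: a path with both endpoints on the boundary of a disk face separates that face, which both constrains where the endpoints of $H$ may lie relative to the face and lets me rule out placements that would force a crossing or disconnect $H$ from its required attachment points. Working through each frame should produce a short list of candidate completions.

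Finally I would collapse this list modulo equivalence. On the one hand, the homeomorphisms used in Lemmas \ref{lem:ClassDirEdgeK33}--\ref{lem:ClassCycleK33} (the rotations, the reflection $r$, and the cut-and-paste map $\varepsilon$) identify many of the candidate completions, yielding the upper bound of two. On the other hand, to see that two genuinely distinct embeddings survive, I would compute the multiset of facial walk lengths of each survivor; since any homeomorphism of the torus carries facial walks to facial walks of the same length (the remark at the end of Section \ref{sec:prelim}), two embeddings with different facial-length data are inequivalent, and this invariant should separate the two survivors.

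The main obstacle I anticipate is the exhaustiveness and bookkeeping of the middle step rather than any single hard idea: one must be certain that every face-placement of $H$ has been considered for every frame, that no equivalence is overlooked when claiming the upper bound of two, and conversely that the two final embeddings are provably inequivalent. Proposition \ref{lem:JCT_consequenc} controls the placements within a face and the facial-walk-length invariant controls the inequivalences, so the argument should reduce to a careful but finite case analysis anchored by the $\K$ classification.
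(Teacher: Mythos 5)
Your overall strategy matches the paper's: fix a subdivision of $\K$ inside $F_{11}$, invoke the decorated-embedding classification (here the edged one, Lemma \ref{lem:ClassEdgedK33}) to get finitely many frames, use Proposition \ref{lem:JCT_consequenc} to control how the remaining edges can be drawn in the disk faces, and then collapse the survivors modulo equivalence. However, your final step contains a genuine gap: you propose to certify that the two surviving embeddings are inequivalent by comparing their multisets of facial walk lengths, but for $F_{11}$ this invariant does not separate them. Both toroidal embeddings of $F_{11}$ have exactly four facial $4$-cycles and two facial $10$-cycles. The paper's remark preceding Lemma \ref{lem:F11TorusEmbeddings} has to work harder: it identifies the four vertices that each lie on two facial $4$-cycles, pairs them off according to which facial $10$-cycles they share, and observes that the distance between such a pair along a common facial cycle is $5$ in one embedding and $3$ in the other. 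You would need some such finer homeomorphism invariant; as written, your concluding step proves nothing.

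A secondary caution on the upper bound: the homeomorphisms $\rho$, $r$, $\varepsilon$ from the $\K$ classification preserve the image of $\K$ but need not preserve the image of $F_{11}$ once the extra edges are added, so they cannot simply be reused to identify candidate completions of $F_{11}$ with one another. The paper instead exhibits the equivalences directly, by decomposing each candidate embedding into its facial polygons with labelled vertices and checking that the gluing data agrees with that of one of the two reference embeddings (Figures \ref{fig:5EmbF11Decomp} and \ref{fig:3EmbF11Decomp}). Also note that in the one frame that actually extends (Case 2, where $e^\ast$ lies on only one facial cycle), the attachment vertices appear more than once on the boundary of the relevant face, so the paper must pass to a cylinder cut open into a disk before Proposition \ref{lem:JCT_consequenc} applies; your description of edges drawn in a single disk face with endpoints on its boundary glosses over exactly the case that survives.
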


\begin{figure}[H]
        \centering
        \includegraphics[height=3cm]{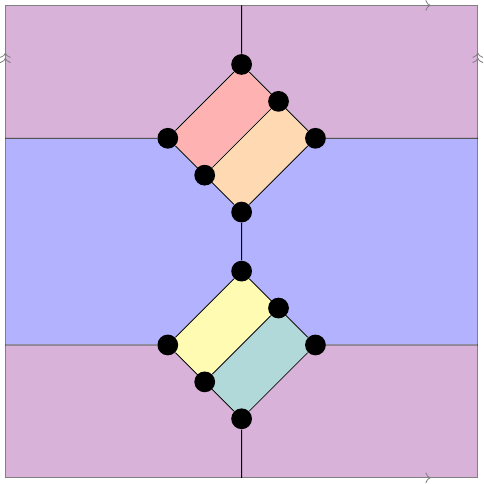} \hspace{.1cm}
        \includegraphics[height=3cm]{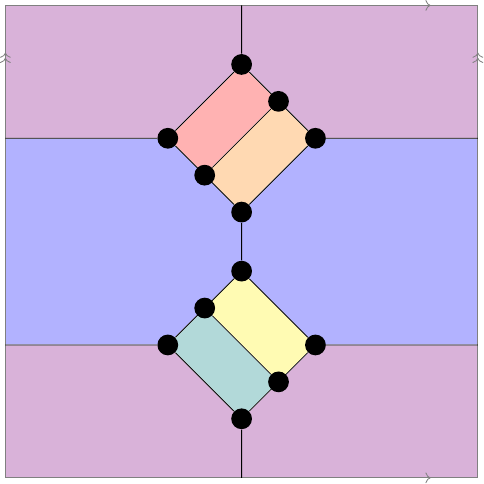}
        \caption{The two embeddings of $F_{11}$ into the torus}
        \label{fig:TwoF11Embeddings}
    \end{figure}

    \begin{remark}
        \noindent Both embeddings have four facial cycles of length four, and two facial cycles of length ten. To distinguish between the two, we observe that both embeddings have four vertices that are each part of two facial $4$-cycles. We label these vertices $a, b, c, d$ as shown in the figure below. Moreover, vertices $a$ and $d$ lie on a common facial cycle, and $b$ and $c$ lie on common facial cycle. Denote these by $C^{ad}_L, C^{ad}_R$, $C^{bc}_L$ and $C^{bc}_R$, respectively. We observe that on the one hand, vertices $a$ and $d$ have distance five in $C^{ad}_L$, and so do vertices $b$ and $c$ in $C^{bc}_L$. On the other hand, vertices $a$ and $d$ have distance three in $C^{ad}_R$, and so do vertices $b$ and $c$ in $C^{bc}_R$. Thus, the two embeddings cannot be equivalent. We will refer to the first of these two embeddings as the \textit{5-embedding}, and the other as the \textit{3-embedding}. 
        \begin{center}
            \includegraphics[width=.45\textwidth]{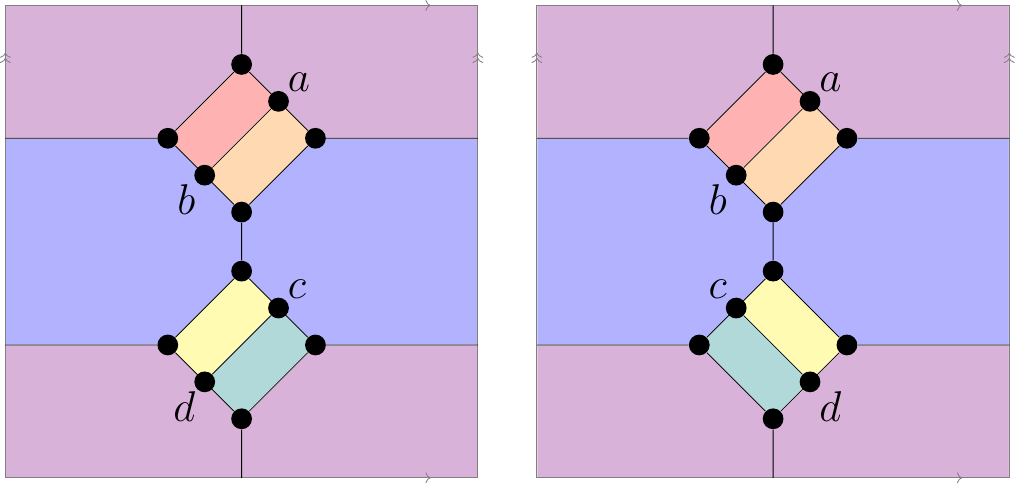}
        \end{center}
    \end{remark}

\begin{proof}
    Since $F_{11}$ is a cubic nonprojective graph, it is nonplanar and contains a subgraph homeomorphic to $K_{3,3}$. Furthermore, any subgraph homeomorphic to $K_{3,3}$ is embedded into the torus in one of the ways pictured in Figure \ref{fig:K33InTorus}. To classify the embeddings of $F_{11}$, we fix a subgraph $H$ homeomorphic to $\K$ as shown below. Note that $V(H)=V(F_{11})$ and $E(H)=E(F_{11}) \setminus \{(i, i+3)\}_{i \in \{1,2,3\}}$. Let $e^\ast$ denote the path of length seven in $H$ containing the vertices $1$ through $6$.
    \begin{figure}[H]
        \centering
        \includegraphics[height=2.5cm]{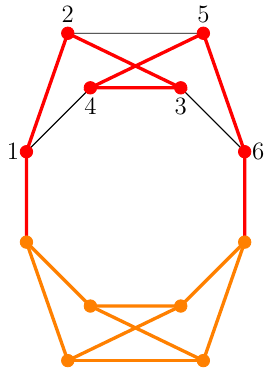}
    \end{figure}
    The graph $H$ is shown below, where $\K$ is drawn as the Möbius ladder on three rungs. Once we have the pictured subdivision of $\K$, there is a unique way to add the remaining edges to obtain $F_{11}$ by recalling that $F_{11}$ is cubic and has girth four.
    \begin{figure}[H]
        \centering
        \includegraphics[height=2cm]{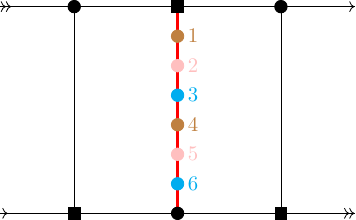} \hspace{1cm}
        \includegraphics[height=2cm]{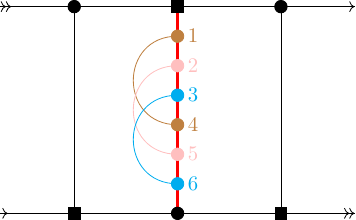}
    \end{figure}

    By Lemma \ref{lem:ClassEdgedK33}, there are five inequivalent edged embeddings $(\K, e^\ast) \hookrightarrow T^2$. As described above, picking one edge in $\K$ to be the edge $e^\ast$ completely determines how to build $F_{11}$. Hence, we analyse the five cases below and check which ones can be extended to an embedding of $F_{11}$.
    \begin{figure}[H]
        \centering
        \includegraphics[width=.7\textwidth]{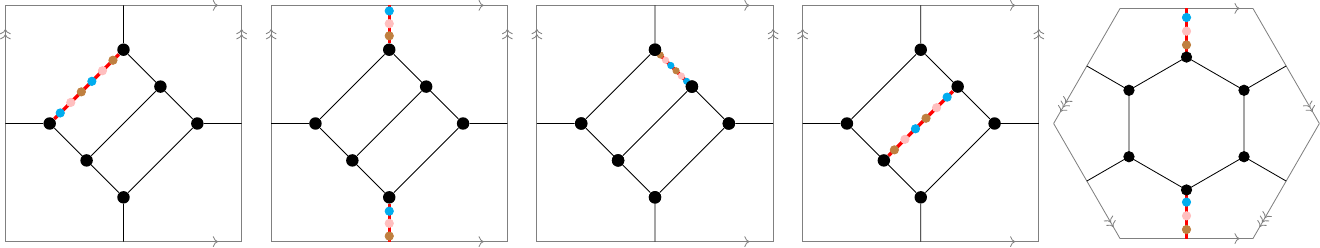}
        \caption{Subdivisions of $\K$ in $F_{11}$}
        \label{fig:K33InF11Cases}
    \end{figure}

    \noindent In Cases 1, 3, 4, and 5 of Figure \ref{fig:K33InF11Cases} $e^\ast$ is part of two distinct facial cycles bounding faces that are homeomorphic to a disk.

        \noindent We may assume without loss of generality that the edge $14$ goes through face $f_1$. By Lemma \ref{lem:JCT_consequenc} this cuts $f_1$ into two components $f'_1$ and $f''_1$ as pictured below. This forces the edge $25$ to go through face $f_2$, cutting it into two components $f'_2$ and $f''_2$ as pictured below. It is now impossible to add the final edge $36$ as the vertex $3$ is part of the facial cycles of $f''_1$ and $f''_2$, whereas the vertex $6$ is part of the facial cycles of $f'_1$ and $f'_2$. Hence, these embeddings of a subdivision of $\K$ cannot be extended to an embedding of $F_{11}$.
        \begin{figure}[H]
            \centering
            \includegraphics[height=2cm]{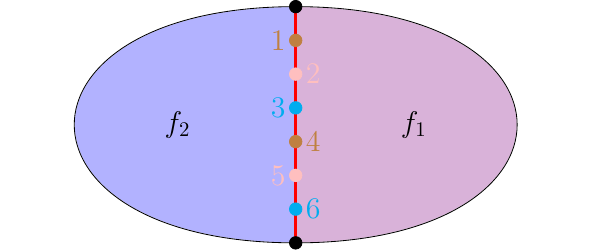}
            \includegraphics[height=2cm]{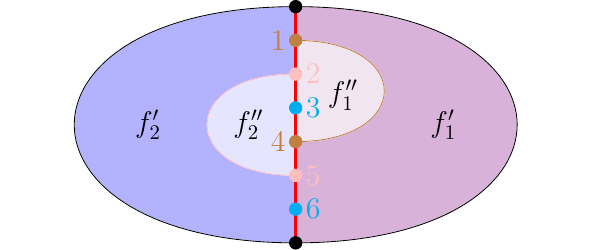}
            \label{fig:F11EmbeddingsBad2}
        \end{figure}

        \noindent Case 2 in Figure \ref{fig:K33InF11Cases} differs from the others as $e^\ast$ is part of only one facial cycle. 
        Note that cutting the torus along the highlighted edges of $\K$ gives a cylinder.
        \begin{center}
            \includegraphics[height=2cm]{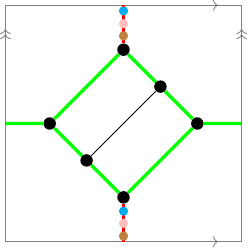}
        \end{center}

        \noindent We now want to connect vertices $2$ and $5$ by an edge. Note that we may view a cylinder as a disk with part of the boundary identified along $e^\ast$. In particular, there are two copies of $2$, namely $2'$ and $2''$ on the boundary of the disk, and two copies of $5$, namely $5'$ and $5''$. Note that by symmetries of the cylinder, a path from $2'$ to $5'$ is equivalent to a path from $2''$ to $5''$. Similarly, a path from $2'$ to $5''$ is equivalent to one from $2''$ to $5'$.
        \begin{center}
            \includegraphics[height=2.5cm]{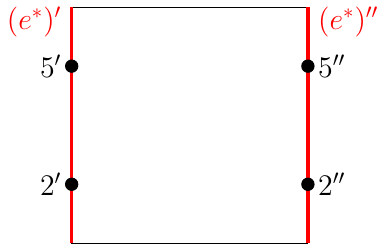}
        \end{center}

        \noindent By Lemma \ref{lem:JCT_consequenc}, a simple continuous path $p$ from $2'$ to $5'$ separates the interior of the disk into two components. One component has boundary $(e^\ast)'_2 \cup p$, while the other has boundary $(e^\ast)'_1 \cup r_1 \cup (e^\ast)'' \cup r_2 \cup (e^\ast)'_3 \cup p$. Similarly, a simple continuous path $q$ from $2'$ to $5''$ separates the interior of the disk into two components. One component has boundary $(e^\ast)'_2 \cup (e^\ast)'_1 \cup r_1 \cup (e^\ast)''_1 \cup q$, while the other has boundary $(e^\ast)''_2 \cup (e^\ast)''_3 \cup r_2 \cup (e^\ast)'_3 \cup q$. These components then correspond to components of the cylinder built from the usual quotient map.
        \begin{center}
            \includegraphics[height=2.5cm]{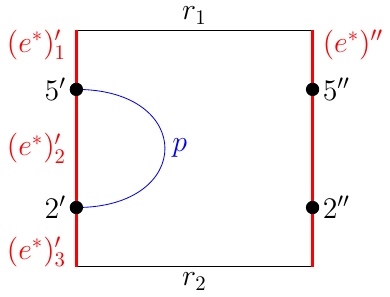} \hspace{.5cm}
            \includegraphics[height=2.5cm]{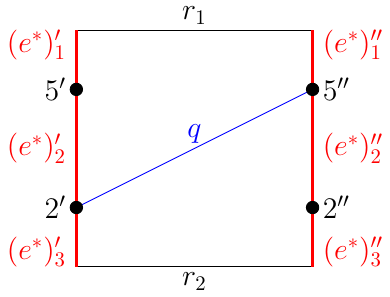}
        \end{center}

        \noindent 
        As just established, adding the edge $25$ cuts the cylinder into two components in one of two ways. We quickly see that in one of the cases, we cannot add the edge $36$ after having added the edge $14$ in either of the two possible ways, as shown below.
        \begin{center}
            \includegraphics[height=2cm]{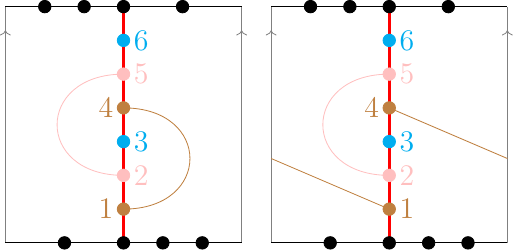}
        \end{center}

    \noindent In the other case we similarly have two ways of drawing the edge $14$. Moreover, once the edge $14$ is drawn, there are two ways of drawing the edge $36$. The four possible drawings are shown below.
    \begin{center}
        \includegraphics[height=2cm]{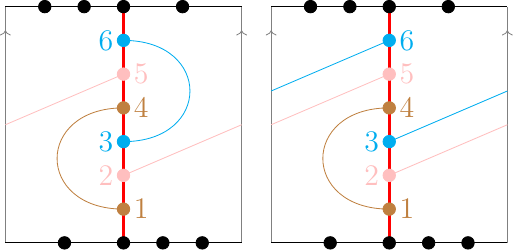}
        \includegraphics[height=2cm]{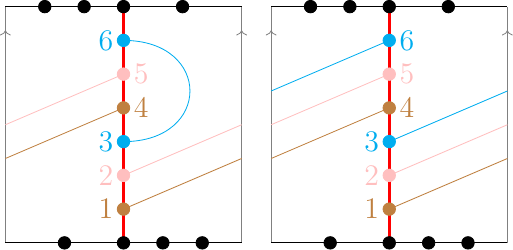}
    \end{center}

    \noindent Correspondingly, we have shown that all embeddings of $F_{11}$ are equivalent to one of the four pictured below.
    \begin{figure}[H]
        \centering
        \includegraphics[height=3cm]{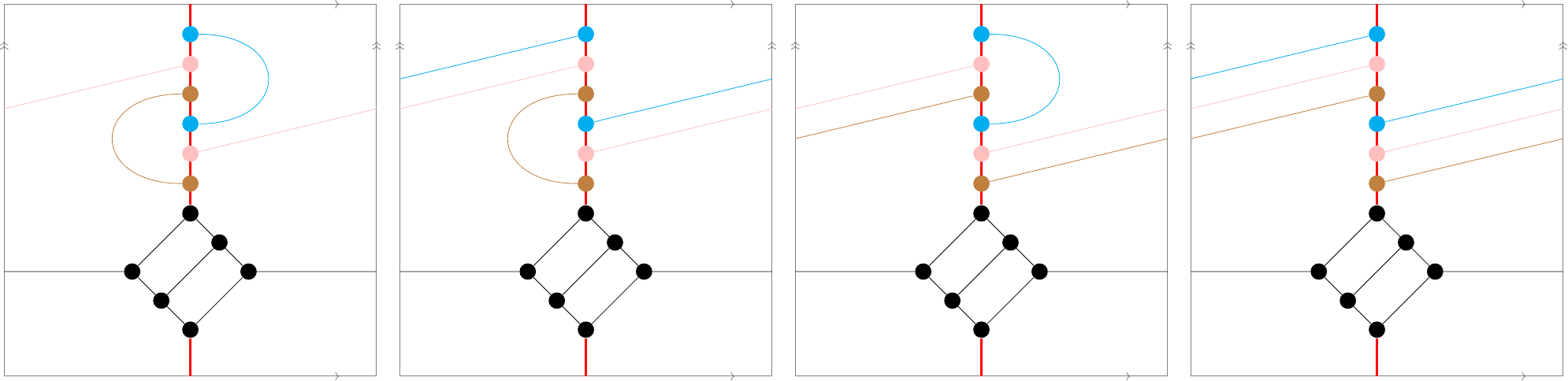}
        \caption{Extensions of $\K$ embeddings to $F_{11}$ embeddings}
        \label{fig:4ExtensionsOfF11}
    \end{figure}

    We will now show that each of the four embeddings of $F_{11}$ into the torus in Figure \ref{fig:4ExtensionsOfF11} is equivalent to one of the two embeddings in Figure \ref{fig:TwoF11Embeddings}. To this end, we will decompose each of the two embeddings in Figure \ref{fig:TwoF11Embeddings} into their facial $n$-gons and describe how to glue them together to obtain a torus. Then, we will show that each of the four embeddings in \ref{fig:4ExtensionsOfF11} can be decomposed and glued together in one of those two ways.

    %\vspace{.25cm}
    \noindent Consider the four squares and two decagons below. Note that each vertex appears three times, and each edge appears twice. Identifying vertices with the same label and edges between the same pair of vertices, we obtain the 5-embedding of $F_{11}$ from Figure \ref{fig:TwoF11Embeddings}.
    \begin{figure}[H]
        \centering
        \includegraphics[height=2cm]{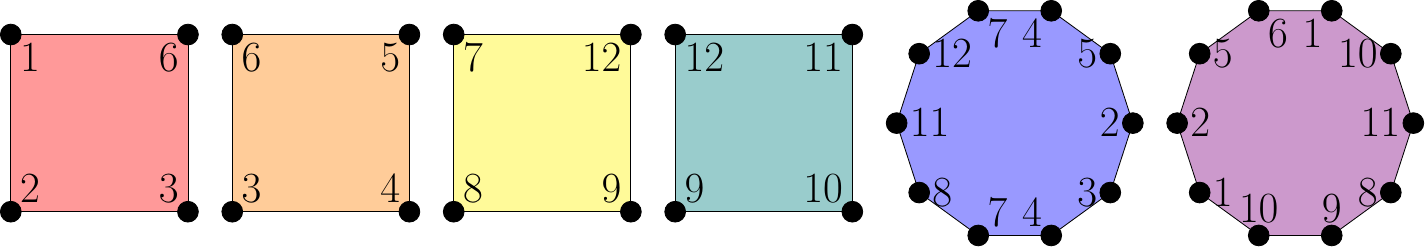} \hspace{.25cm}
        \includegraphics[height=3cm]{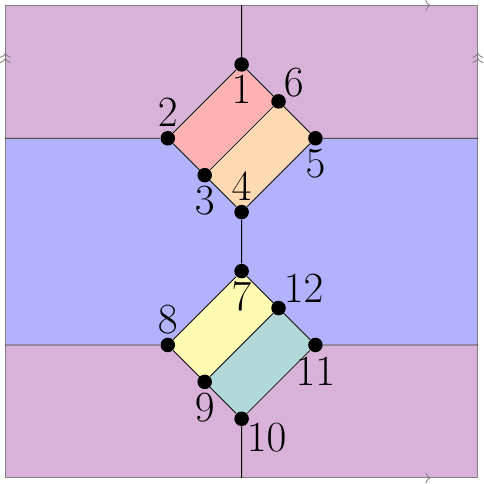}
        \caption{Decomposing the 5-embedding of $F_{11}$}
        \label{fig:5EmbF11Decomp}
    \end{figure}

    We observe that we can decompose the embeddings of $F_{11}$ in Figure \ref{fig:4ExtensionsOfF11} into four squares and two decagons.
    Moreover, we can label the vertices of the first and fourth embedding and decompose the torus as in Figure \ref{fig:5EmbF11Decomp}. This implies the existence of a homeomorphism of the torus that realizes an equivalence of the embedding from this case and the known 5-embedding of $F_{11}$.
        \begin{center}
            \includegraphics[height=3cm]{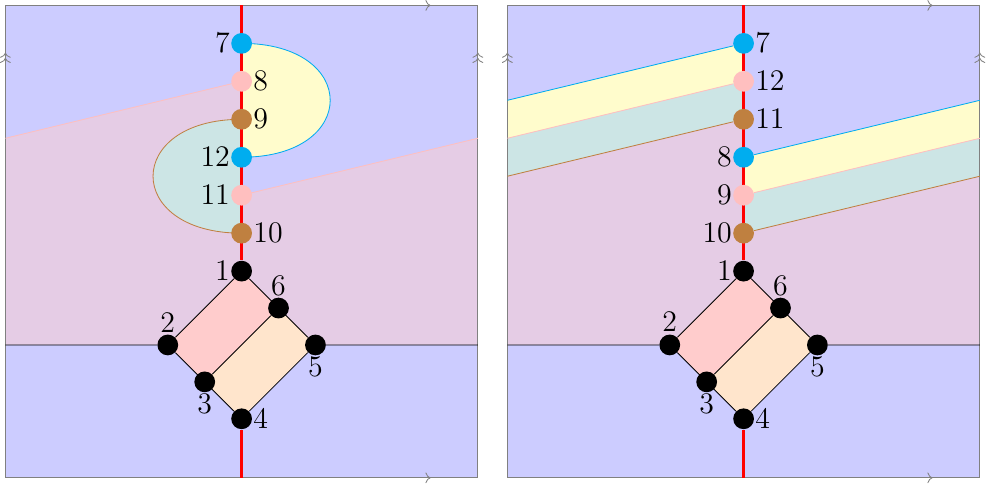}
        \end{center}

    Next, consider the four squares and two decagons below. Note that each vertex appears three times, and each edge appears twice. Identifying vertices with the same label and edges between the same pair of vertices, we obtain the 3-embedding of $F_{11}$ from Figure \ref{fig:TwoF11Embeddings}.
    \begin{figure}[H]
        \centering
        \includegraphics[height=2cm]{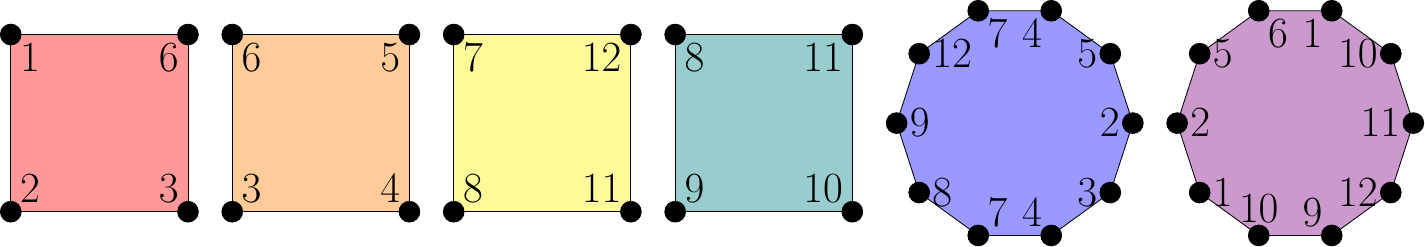} \hspace{.25cm}
        \includegraphics[height=3cm]{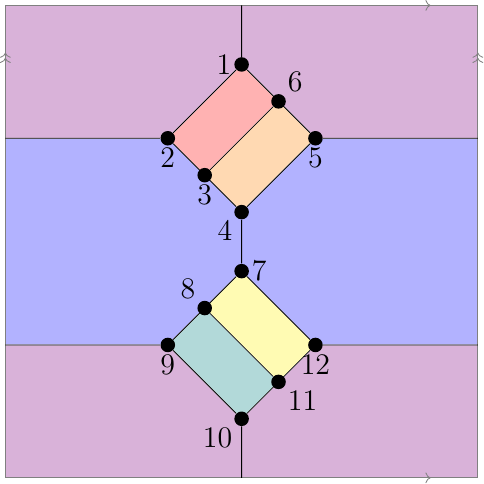}
        \caption{Decomposing the 3-embedding of $F_{11}$}
        \label{fig:3EmbF11Decomp}
    \end{figure}

    Similarly to above, we can label the vertices of the second and third embedding in Figure \ref{fig:4ExtensionsOfF11} and decompose the torus as in Figure \ref{fig:3EmbF11Decomp}. This implies the existence of a homeomorphism of the torus that realizes an equivalence of the embedding from this case and the known 3-embedding of $F_{11}$.
        %\newcolumn
        \begin{center}
            \includegraphics[height=3cm]{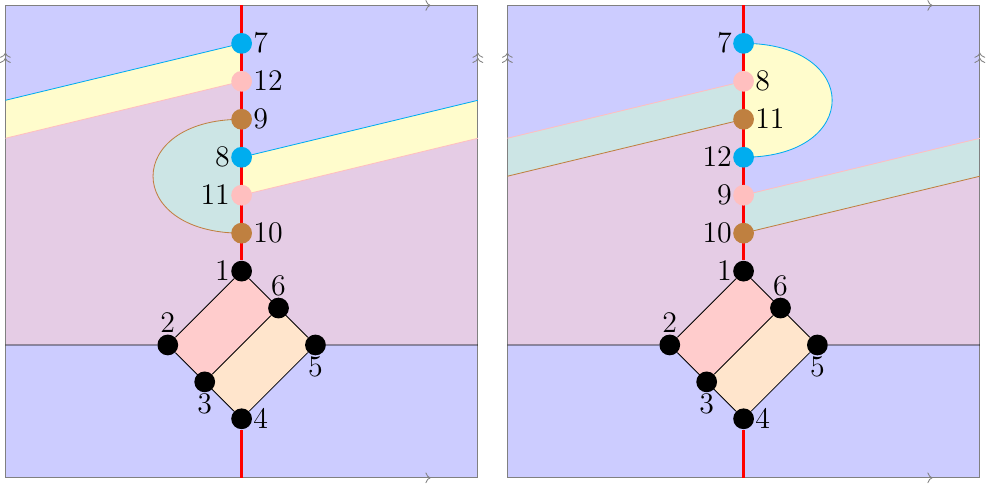}
        \end{center}
    %\end{multicols}

    \noindent We can now conclude that there are at most two inequivalent embeddings of $F_{11}$ into the torus, namely the ones depicted in Figure \ref{fig:TwoF11Embeddings}. The remark preceding the proof of Lemma \ref{lem:F11TorusEmbeddings} implies that these two embeddings are inequivalent, finishing the proof.   
    
\end{proof}

\section{Toroidal embeddings of \texorpdfstring{$F_{12}$}{F\_{12}}}
\label{sec:F12Lemma}

\setcounter{claim}{0}

\begin{lemma}
\label{lem:F12TorusEmbeddings}
    There are exactly four inequivalent unlabelled embeddings of $F_{12}$ into the torus.
\end{lemma}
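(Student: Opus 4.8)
The plan is to follow the strategy used for $F_{11}$ in Lemma~\ref{lem:F11TorusEmbeddings}, exploiting the fact that $F_{12}$, being a cubic projective-plane obstruction, is nonplanar and hence contains a subdivision of $\K$. First I would fix a concrete subgraph $H \subseteq F_{12}$ homeomorphic to $\K$ together with the collection of ``extra'' edges in $F_{12} \setminus H$, and check that the cubic and girth constraints force these extra edges to attach in an essentially unique combinatorial pattern relative to the underlying copy of $\K$. As in the $F_{11}$ case, this reduces the problem to understanding how a distinguished feature of $\K$ (a single edge, or a $4$-cycle, depending on how the extra edges of $F_{12}$ meet $H$) sits inside a toroidal embedding. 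Thus every embedding of $F_{12}$ into the torus restricts to one of the finitely many embeddings classified in Lemma~\ref{lem:ClassEdgedK33} or Lemma~\ref{lem:ClassCycleK33}, of which there are five each.

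With this reduction in hand, I would run through the (at most five) relevant embeddings of $H$ one at a time and ask, for each, whether the missing edges of $F_{12}$ can be drawn. Since each face of a toroidal $\K$-embedding is an open disk (as noted after Figure~\ref{fig:K33InTorus}), each missing edge must be routed through a single face, and Proposition~\ref{lem:JCT_consequenc} controls how drawing it subdivides that face into new disk faces and which vertices become accessible on the resulting facial walks. Iterating this face-cutting analysis edge by edge, I expect some of the base cases to be obstructed outright (as Cases~1, 3, 4, 5 were for $F_{11}$) and the surviving case or cases to admit finitely many completions, recorded as an a priori list of candidate embeddings of $F_{12}$.

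The remaining two steps are to collapse this candidate list down to exactly four equivalence classes and then to certify that those four are genuinely distinct. For the collapsing step I would decompose each candidate embedding into its facial polygons and re-glue them, producing explicit self-homeomorphisms of the torus that identify candidates, exactly as in Figures~\ref{fig:5EmbF11Decomp}--\ref{fig:3EmbF11Decomp} for $F_{11}$. For the separation step I would compute homeomorphism invariants: the multiset of facial-walk lengths, the number of edges traversed twice by a single facial walk, and finer data such as the distances within a facial cycle between distinguished vertices lying on several facial $4$-cycles (the analogue of the $a,b,c,d$ analysis in the remark preceding Lemma~\ref{lem:F11TorusEmbeddings}). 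The main obstacle is precisely this bookkeeping at the end: with four classes rather than two, the coarse invariant of face lengths will likely not separate all of them, so the crux is to find a sufficiently refined combinatorial invariant that distinguishes every pair while simultaneously verifying that the re-gluing homeomorphisms really do merge all the remaining candidates, so that the final count is neither over- nor under-stated.
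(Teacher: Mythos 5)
Your overall strategy matches the paper's: fix a subdivision $H$ of $\K$ inside $F_{12}$, reduce to the classified decorated embeddings of $\K$ in the torus, extend face by face using the disk structure of the faces and Proposition~\ref{lem:JCT_consequenc}, then merge candidates by polygon decompositions and separate the survivors by invariants finer than the face-length multiset (and you are right that the coarse invariant fails here: two of the four classes share the profile of four $5$-faces and two $8$-faces). However, there is a concrete gap in your reduction step. You assert that once $H$ is fixed, the extra edges of $F_{12}$ attach in an essentially unique pattern determined by a single undirected edge or a single $4$-cycle of $\K$, so that the base cases are the five embeddings of Lemma~\ref{lem:ClassEdgedK33} or the five of Lemma~\ref{lem:ClassCycleK33}. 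For $F_{12}$ this is false: the subdivision of $\K$ sitting inside $F_{12}$ is pinned down only by a \emph{directed} edge $\vec e$ (its tail is the unique vertex of $\K$ with no incident edge subdivided, and the two other edges at its head are subdivided once) \emph{together with} a choice of one of the two disjoint pairs of edges in the resulting $4$-cycle to subdivide twice. The paper therefore runs the extension argument over the six directed edged embeddings of Lemma~\ref{lem:ClassDirEdgeK33}, each decorated with two possible choices of that pair, giving twelve base cases rather than five.

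This matters because the extra decoration is not absorbed by symmetries of the embedded $\K$: the two orientations of $e$ and the two choices of subdivided pair can sit inequivalently on the torus (this is exactly why Embeddings 3 and 4 of Figure~\ref{fig:DirEdgedK33InT} are distinct as directed edged embeddings but merge in Lemma~\ref{lem:ClassEdgedK33}). Running your analysis over only five undirected cases would omit placements of $H$ and hence risk missing embeddings of $F_{12}$, invalidating the claim of completeness of the final list. The fix is to carry out your ``check that the extra edges attach uniquely'' step honestly, discover that the required decoration is a directed edge plus a pair of disjoint edges in a $4$-cycle, and enumerate all twelve resulting cases; the rest of your plan then goes through as in the paper.
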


\begin{figure}[H]
    \centering
    \includegraphics[height=2cm]{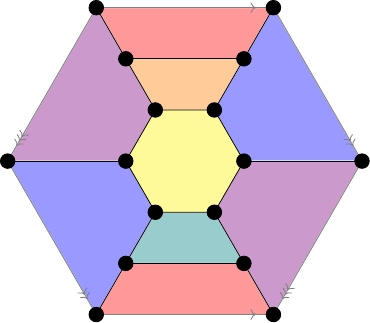} \hspace{.25cm}
    \includegraphics[height=2cm]{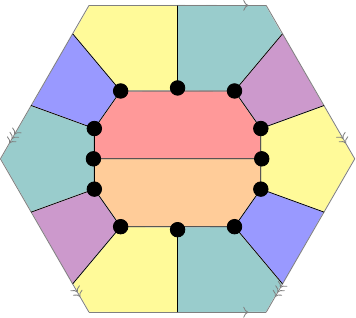} \hspace{.375cm}
    \includegraphics[height=2cm]{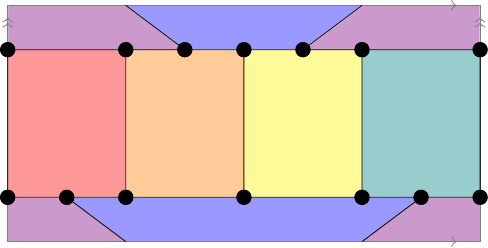} \hspace{.5cm}
    \includegraphics[height=2cm]{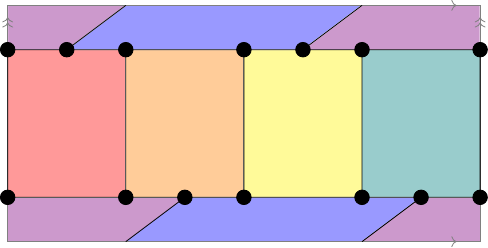}
    \caption{The four embeddings of $F_{12}$ into the torus}
    \label{fig:FourF12Embeddings}
\end{figure}
%\vspace{.25cm}
\begin{remark}
    \text{}
    \begin{enumerate}
        \item The first embedding from the left has two facial cycles of length four, two facial cycles of length six, and two facial cycles of length eight. In particular, it is the only embedding with a facial $6$-cycle and thus cannot be equivalent to any of the other embeddings. We will refer to it as the \textit{$6$-cycle embedding}.
        \item The second embedding from the left has two facial cycles of length four, and four facial cycles of length seven. In particular, it is the only embedding with a facial $7$-cycle and thus cannot be equivalent to any of the other embeddings. We will refer to it as the \textit{$7$-cycle embedding}.
        \item The remaining two embeddings both have four facial cycles of length five, and two facial cycles of length eight. To distinguish between the two, we observe that both embeddings contain a cylinder of $5$-cycles, and each boundary component of that cylinder has two vertices of degree two. In one of the embeddings, the vertices of degree two in one boundary component belong to adjacent $5$-cycles, whereas in the other they do not. More precisely, the distance between degree two vertices in a boundary component is two in one of the embeddings, and three in the other one. Thus, the two embeddings cannot be equivalent. We will refer to the first of these two embeddings as the \textit{2-embedding} and the other one as the \textit{3-embedding}.
        \begin{center}
            \includegraphics[height=1.5cm]{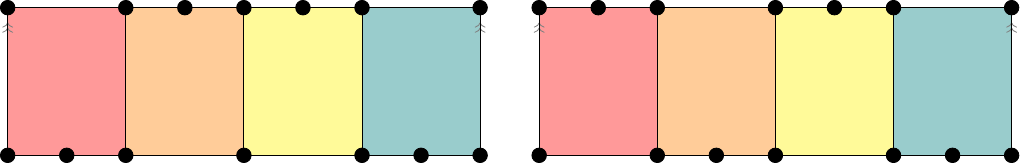}
        \end{center}
    \end{enumerate}
\end{remark}

%\vspace{.25cm}
\begin{proof}
    We proceed similarly as in the proof of Lemma \ref{lem:F11TorusEmbeddings}. Since $F_{12}$ is a cubic nonprojective graph, it is nonplanar and contains a subgraph homeomorphic to $K_{3,3}$. Furthermore, any subgraph homeomorphic to $\K$ is embedded into the torus in one of the ways pictured in Figure \ref{fig:K33InTorus}. To classify the embeddings of $F_{12}$, we fix a subgraph $H$ homeomorphic to $\K$ as shown below.
    \begin{figure}[H]
        \centering
        \includegraphics[height=2cm]{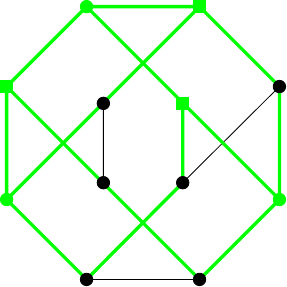} \hspace{.5cm}
        \includegraphics[height=2cm]{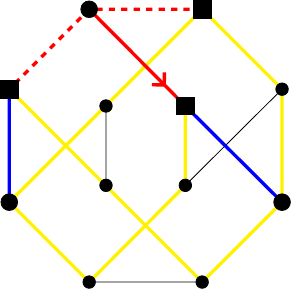}
    \end{figure}
    To build this subdivision of $\K$, note that the directed red edge $\vec e$ starts at the unique vertex of $\K$ none of whose incident edges get subdivided. The remaining edges incident to the endpoint of $\vec e$ get subdivided once. In the unique $4$-cycle of this subdivision, choose a pair of disjoint edges and subdivide each of these twice. In Figure \ref{fig:F12Mobius}, we have coloured the other two edges of this $4$-cycle in blue.
    Once we have the pictured subdivision of $\K$, there is a unique way to add the remaining edges to obtain $F_{12}$. In particular, $b$ has to be part of a $4$-cycle that includes a blue edge. Thus, $bb' \in E(F_{12})$. Similarly, $c$ has to part of a $4$-cycle that includes a blue edge, implying that $cc' \in E(F_{12})$. This forces $aa'$ to be the final edge.
    \begin{figure}[H]
        \centering
        \includegraphics[height=2cm]{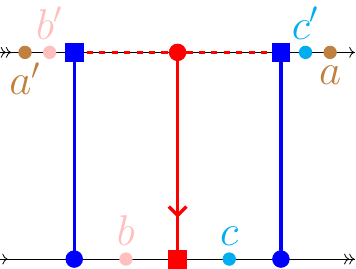} \hspace{1cm}
        \includegraphics[height=2.25cm]{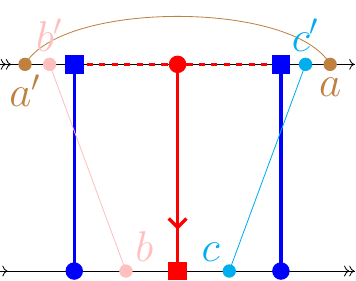}
        \caption{Building $F_{12}$ from $\K$}
        \label{fig:F12Mobius}
    \end{figure}

    %\vspace{.25cm}
    \noindent By Lemma \ref{lem:ClassDirEdgeK33}, there are six inequivalent directed edged embeddings $(\K, \vec e) \hookrightarrow T^2$. Note that picking the directed edge forces the location of the dashed red edges in Figure \ref{fig:F12Mobius}. Then there are two ways to choose the blue edges (once the first one is picked, the second is forced since they have to be disjoint). Thus, decorating the directed edged embeddings with the two blue edges $e_1, e_2$, we get twelve \textit{blue directed edged embeddings}, $(\K, \vec e, \{e_1, e_2\}) \hookrightarrow T^2$. As described above, this completely determines $F_{12}$. Hence, we analyse the twelve cases below and check into which the remaining three edges $aa', bb', cc'$ can be added.
    \begin{figure}[H]
        \centering
        \includegraphics[height=4.85cm]{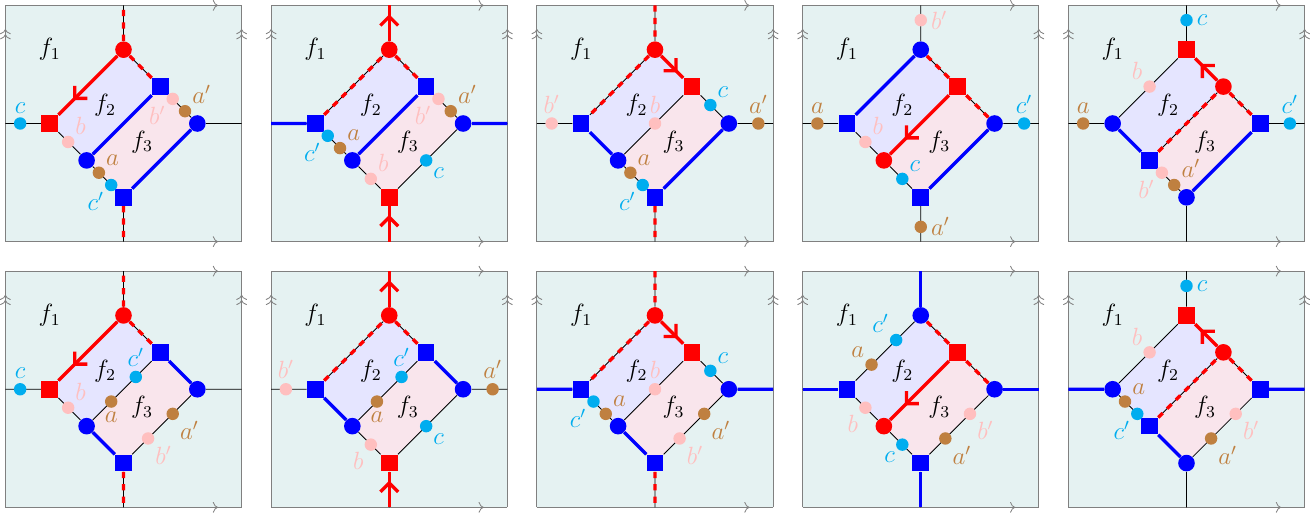}
        \includegraphics[height=4.85cm]{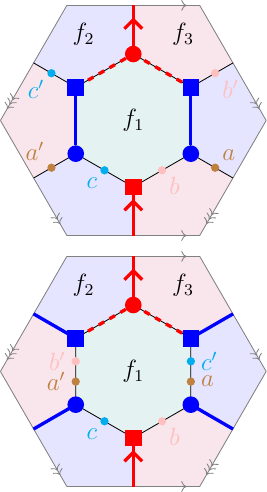}
        \caption{Subdivisions of $\K$ in $F_{12}$}
        \label{fig:K33InF12Cases}
    \end{figure}

    \noindent The cases are labelled 1a, 2a, etc. across the top row, and 1b, 2b, etc. across the bottom row.
    We immediately see the following:
    \begin{itemize}
        \item Case 1b: The edge $cc'$ cannot be added. Thus, this embedding does not extend to an embedding of $F_{12}$.
        \item Case 2b: The edge $aa'$ cannot be added. Thus, this embedding does not extend to an embedding of $F_{12}$.
        \item Case 3a: The edge $bb'$ cannot be added. Thus, this embedding does not extend to an embedding of $F_{12}$.
    \end{itemize}

    %\vspace{.25cm}
    \noindent Note that since all embeddings of $\K$ into the torus are cellular, each face is homeomorphic to a disk. Thus, if two vertices are on the boundary of a common face and neither of them appears more than once, there is a unique way to add an edge between the vertices up to equivalence.

    %\vspace{.25cm}
    \noindent We now consider the cases that will not result in embeddings of $F_{12}$:
    \begin{itemize}
        \item Case 3b: $f_1$ is the unique face such that the vertices $a$ and $a'$ appear on its boundary, and each appears exactly once. Thus, there is a unique way up to equivalence to draw the edge $aa'$. However, now there is no face such that vertices $c$ and $c'$ appear on its boundary. Hence, we cannot extend the given embedding of a subdivision of $\K$ to an embedding of $F_{12}$.
            \begin{center}
                \includegraphics[height=3cm]{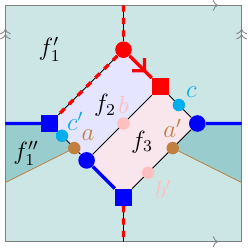}
            \end{center}

        \item Case 5a: $f_1$ is the unique face such that the vertices $b$ and $b'$ appear on its boundary, and each appears exactly once. Thus, there is a unique way to draw the edge $bb'$. However, now there is no face such that vertices $a$ and $a'$ appear on its boundary. Hence, we cannot extend the given embedding of a subdivision of $\K$ to an embedding of $F_{12}$.
            \begin{figure}[H]
                \centering
                \includegraphics[height=3cm]{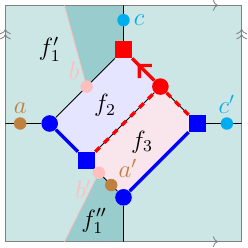}
                \label{fig:F12Case5a}
            \end{figure}

        \item Case 5b: Similar to the previous two cases, $f_1$ is the unique face such that vertices $b$ and $b'$ appear on its boundary, and each appears exactly once. However, now there is no face such that vertices $a$ and $a'$ appear on its boundary. Hence, we cannot extend the given embedding of a subdivision of $\K$ to an embedding of $F_{12}$.
            \begin{figure}[H]
                \centering
                \includegraphics[height=3cm]{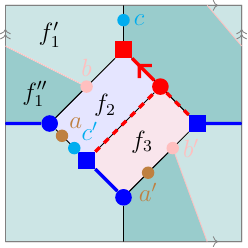}
                \label{fig:F12Case5b}
            \end{figure}
    \end{itemize}

    \noindent Finally, we proceed to the remaining cases, each of which gives at least one embedding of $F_{12}$ into the torus.
    \begin{itemize}
        \item Case 1a: $f_1$ is the unique face such that the vertices $b$ and $ b'$ appear on its boundary, and each appears exactly once. Thus, there is a unique way to draw the edge $bb'$. This then forces the edge $cc'$ to go through face $f'_1$. Finally, for the edge $aa'$ there are two choices, giving the two embeddings of $F_{12}$ pictured below.
            \begin{figure}[H]
                \centering
                \includegraphics[height=3cm]{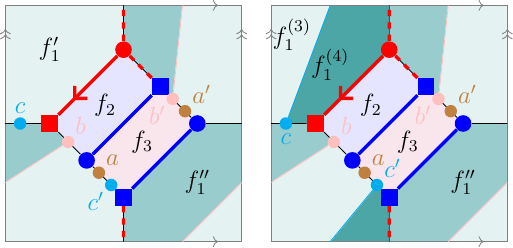} \hspace{.5cm}
                \includegraphics[height=3cm]{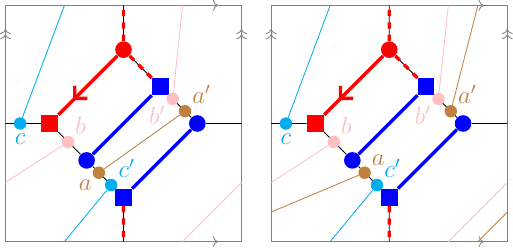}
                \label{fig:F12Case1a}
            \end{figure}

        \item Case 2a: $f_1$ is the unique face such that the vertices $a$ and $a'$ appear on its boundary, and each appears eactly once. Moreover, it is the only face with vertices $c$ and $c'$ on its boundary, and each appears exactly once. Thus, there is at most one way to draw the edges $aa', cc'$. It is easy to check that it is indeed possible to draw both edges, as shown below. This then forces the edge $bb'$ to go through face $f_3$, and we get the embedding of $F_{12}$ pictured below.
            \begin{figure}[H]
                \centering
                \includegraphics[height=3cm]{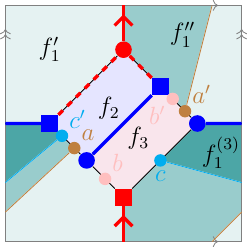} \hspace{1cm}
                \includegraphics[height=3cm]{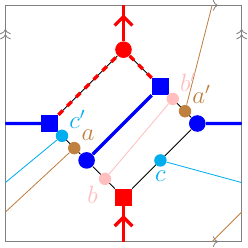}
                \label{fig:F12Case2a}
            \end{figure}

        \item Case 4a: $f_1$ is the unique face such that the vertices $c$ and $c'$ appear on its boundary, and vertex $c'$ appears twice. Thus, there are two ways to draw the edge $cc'$. It is quick to see that one of these two options makes drawing the edge $bb'$ impossible (left picture). Thus, we choose the other option (middle picture). Then there exists a unique way to draw the edge $bb'$ as $f'_1$ is the unique face such that both vertices appear on its boundary, and each appears exactly once. Finally, for the edge $aa'$ there are two choices. Hence, we get the two embeddings of $F_{12}$ pictured below.
            \begin{figure}[H]
                \centering
                \includegraphics[height=3cm]{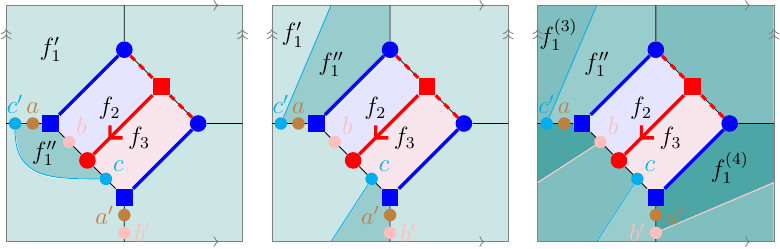} \hspace{1cm}
                \includegraphics[height=3cm]{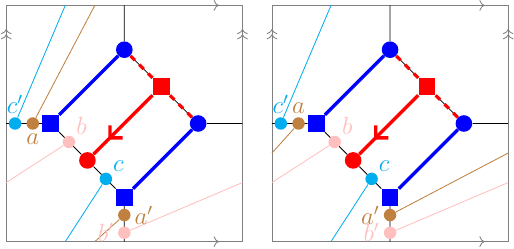}
                \label{fig:F12Case4a}
            \end{figure}

        \item Case 4b: Each of the vertices $a, a', b, b', c,$ and $c'$ appears only on the boundary of face $f_1$, and each appears exactly once. Thus, there is at most one way to draw all three edges $aa', bb', cc'$, pictured below.
            \begin{figure}[H]
                \centering
                \includegraphics[height=2.75cm]{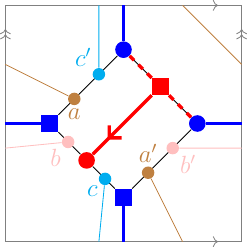} 
                \label{fig:F12Case4b}
            \end{figure}

        \item Case 6a: $f_3$ is the unique face such that the vertices $b$ and $b'$ appear on its boundary, and $f_2$  is the unique face such that the vertices $c$ and $c'$ appear on its boundary. Moreover, each appears exactly once. Thus, there is a unique way to draw the edges $bb'$ and $cc'$. Finally, for the edge $aa'$ there are two choices, giving the two embeddings of $F_{12}$ pictured below.
            \begin{figure}[H]
                \centering
                \includegraphics[height=2.75cm]{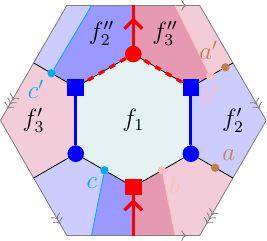} \hspace{.5cm}
                \includegraphics[height=2.75cm]{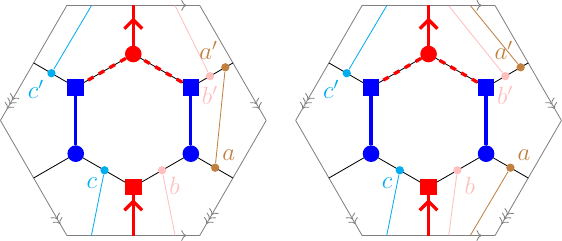}
                \label{fig:F12Case7a}
            \end{figure}

        \item Case 6b: $f_1$ is the unique face such that the vertices $a$ and $a'$ appear on its boundary, and each appears exactly once. Thus, there is a unique way to draw the edge $aa'$. Now, $f_3$ is the unique face with vertices $b$ and $b'$ on its boundary, and $f_2$ is the unique face with vertices $c$ and $c'$ on its boundary. Moreover, each vertex appears exactly once on those boundaries. Thus, we get the embedding of $F_{12}$ shown below.
            \begin{figure}[H]
                \centering
                \includegraphics[height=3cm]{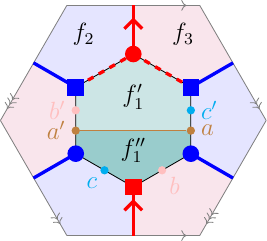} \hspace{1cm}
                \includegraphics[height=3cm]{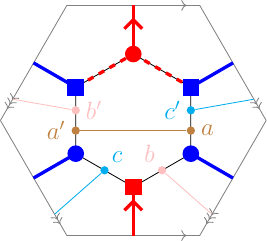}
                \label{fig:F12Case7b}
            \end{figure}
    \end{itemize}

    Using the same technique as in Section \ref{sec:F11Lemma}, we will now show that each of these nine embeddings of $F_{12}$ into the torus is equivalent to one of the four embeddings in Figure \ref{fig:FourF12Embeddings}. 

    %\vspace{.25cm}
    \noindent Consider the two squares, two hexagons, and two octagons below. Note that each vertex appears three times, and each edge appears twice. Identifying vertices with the same label and edges between the same pair of vertices, we obtain the $6$-cycle embedding of $F_{12}$ from Figure \ref{fig:FourF12Embeddings}.
    \begin{figure}[H]
        \centering
        \includegraphics[height=1.75cm]{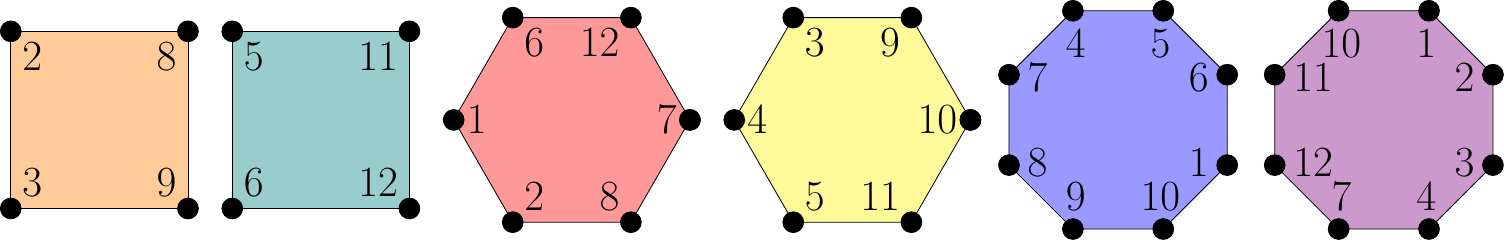} \hspace{.25cm}
        \includegraphics[height=3cm]{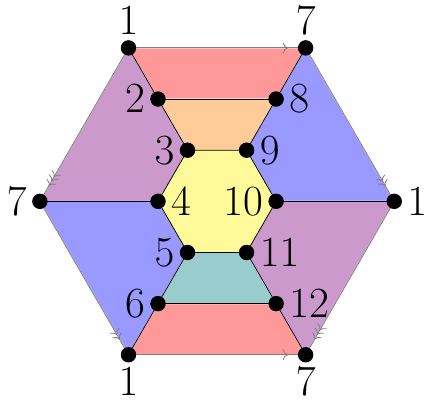}
        \caption{Decomposing the $6$-cycle embedding of $F_{12}$}
        \label{fig:6CycleF12Decomp}
    \end{figure}

    We now observe that we can decompose the embedding of $F_{12}$ resulting from Case 2a into two squares, two hexagons, and two octagons. Moreover, we can label the vertices of this embedding and decompose the torus as in Figure \ref{fig:6CycleF12Decomp}. This implies the existence of a homeomorphism of the torus that realizes an equivalence of the embedding from this case and the known $6$-cycle embedding of $F_{12}$.
        \begin{center}
            \includegraphics[height=3cm]{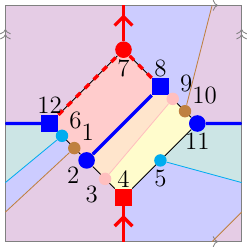}
        \end{center}

    %\vspace{.25cm}
    Next, consider the two squares and four heptagons below. Identifying vertices with the same label and edges between the same pair of vertices, we obtain the $7$-cycle embedding of $F_{12}$ from Figure \ref{fig:FourF12Embeddings}.
    \begin{figure}[H]
        \centering
        \includegraphics[height=1.75cm]{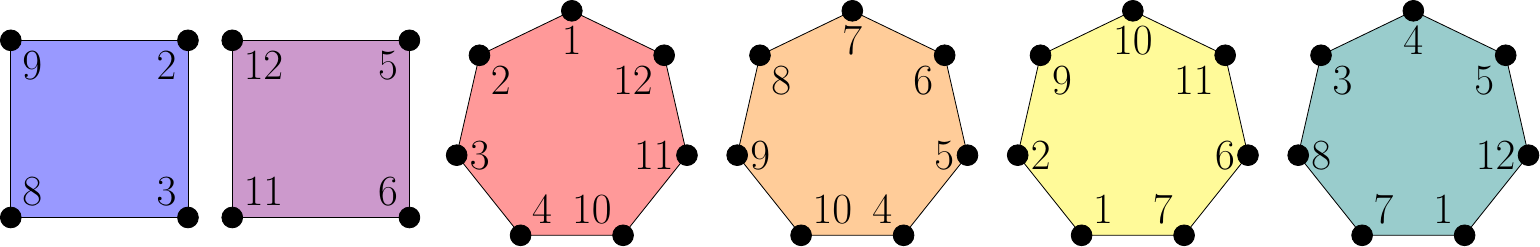} \hspace{.25cm}
        \includegraphics[height=3cm]{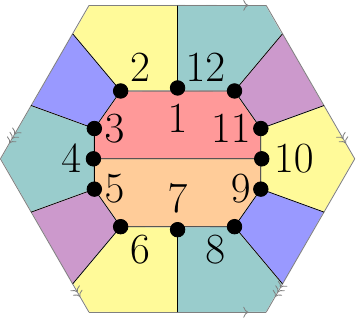}
        \caption{Decomposing the $7$-cycle embedding of $F_{12}$}
        \label{fig:7CycleF12Decomp}
    \end{figure}

    We observe that we can decompose the two embeddings of $F_{12}$ resulting from Cases 4b and 6b into two squares, and four heptagons. Moreover, we can label the vertices of this embedding and decompose the torus as in Figure \ref{fig:7CycleF12Decomp}. Hence, the embeddings from these cases are equivalent to the known $7$-cycle embedding of $F_{12}$.
     %   \newcolumn
        \begin{center}
            \includegraphics[height=3cm]{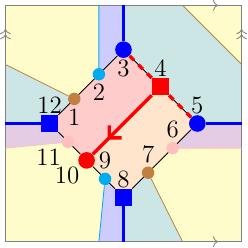}
            \includegraphics[height=3cm]{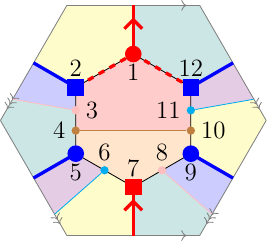}
        \end{center}
    %\end{multicols}

    Consider the four pentagons and two octagons. Identifying vertices with the same label and edges between the same pair of vertices, we obtain the 2-embedding of $F_{12}$ from Figure \ref{fig:FourF12Embeddings}.
    \begin{figure}[H]
        \centering
        \includegraphics[height=1.65cm]{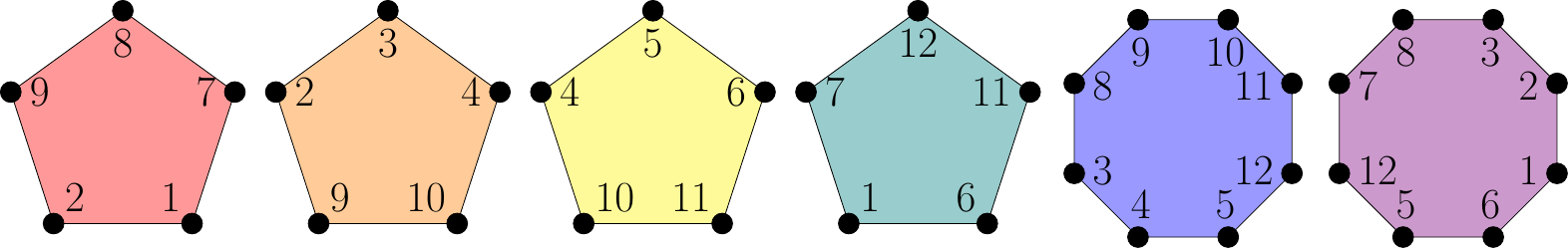} \hspace{.25cm}
        \includegraphics[height=1.75cm]{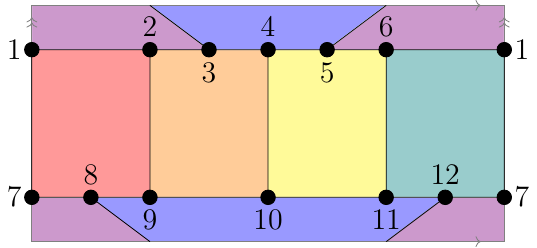}
        \caption{Decomposing the 2-embedding of $F_{12}$}
        \label{fig:11F12Decomp}
    \end{figure}

    We note that we can decompose the two embeddings of $F_{12}$ resulting from Case 1a into five pentagons, and two octagons. Moreover, we can label the vertices of the two embeddings resulting from Case 1a and decompose the torus as in Figure \ref{fig:11F12Decomp}. Hence, the embeddings from this case are equivalent to the known 2-embedding of $F_{12}$.
        \begin{center}
            \includegraphics[height=3cm]{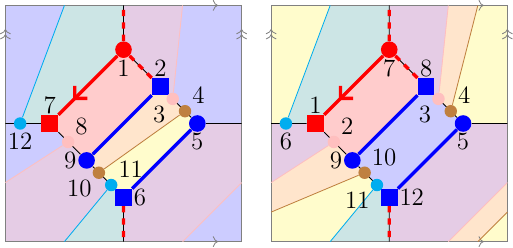}
        \end{center}

    %\vspace{.25cm}
    Finally, we consider the four pentagons and two octagons below. Identifying vertices with the same label and edges between the same pair of vertices, we obtain the 3-embedding of $F_{12}$ from Figure \ref{fig:FourF12Embeddings}.
    \begin{figure}[H]
        \centering
        \includegraphics[height=1.65cm]{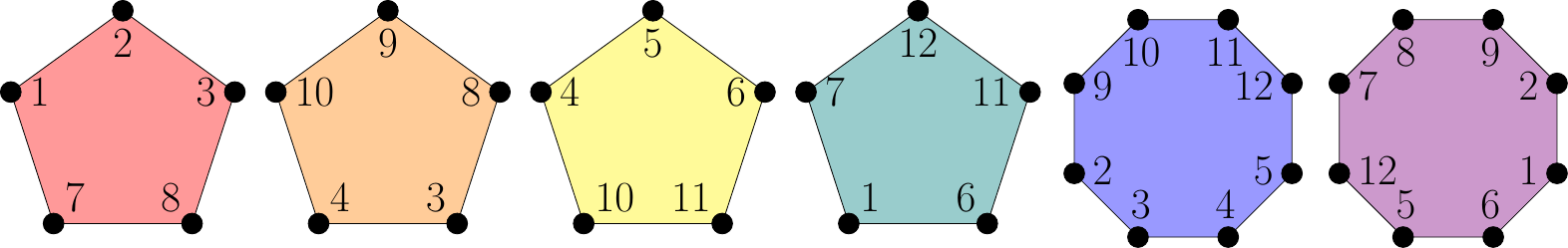} \hspace{.25cm}
        \includegraphics[height=1.75cm]{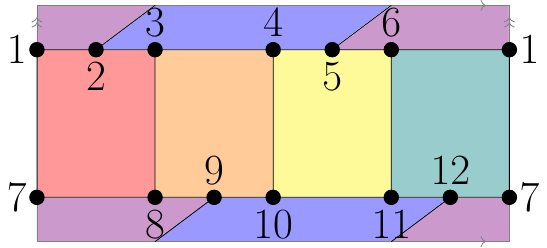}
        \caption{Decomposing the 3-embedding of $F_{12}$}
        \label{fig:12F12Decomp}
    \end{figure}

    We can decompose the four embeddings of $F_{12}$ resulting from Case 4a and 6a into five pentagons, and two octagons. Moreover, we can label the vertices of these four embeddings and decompose the torus as in Figure \ref{fig:12F12Decomp}. Hence, the embeddings from these cases are equivalent to the known 3-embedding of $F_{12}$.
    \begin{center}
        \includegraphics[height=3cm]{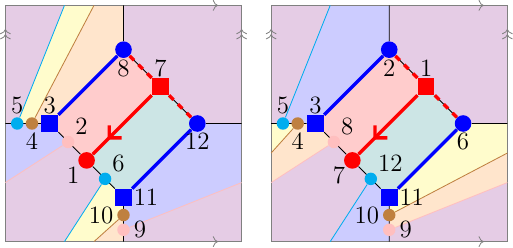}
        \includegraphics[height=3cm]{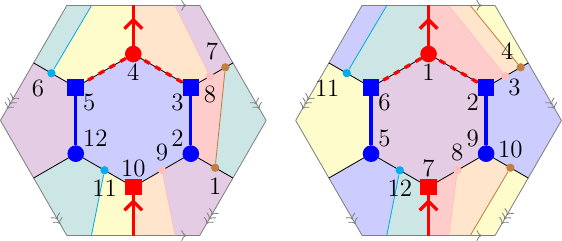}
    \end{center}

    %\vspace{.25cm}
    \noindent We can now conclude that there are at most four inequivalent embeddings of $F_{12}$ into the torus, namely the ones depicted in Figure \ref{fig:FourF12Embeddings}. The remarks preceding the proof of Lemma \ref{lem:F12TorusEmbeddings} imply that these four embeddings are pairwise inequivalent, finishing the proof.   
    
\end{proof}

\section{Toroidal embeddings of \texorpdfstring{$F_{13}$}{F\_{13}}}
\label{sec:F13Lemma}

\setcounter{claim}{0}

\begin{lemma}
\label{lem:F13TorusEmbeddings}
    There are exactly two inequivalent unlabelled embedding of $F_{13}$ into the torus.
\end{lemma}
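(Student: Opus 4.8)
The plan is to follow the exact same strategy used for $F_{11}$ and $F_{12}$: reduce the classification of toroidal embeddings of $F_{13}$ to the known classification of decorated embeddings of $\K$ into the torus. First I would fix a subgraph $H \subseteq F_{13}$ that is homeomorphic to $\K$. Since $F_{13}$ is a cubic nonprojective graph, it is nonplanar, so by \cite{Kuratowski30} it contains a subdivision of $\K$; moreover every such subdivision must sit inside the torus as one of the embeddings of Figure \ref{fig:K33InTorus}. As in the previous sections, I would identify the three edges of $F_{13}$ that are deleted to obtain $H$, and record how their endpoints subdivide the edges of $\K$. The key point to establish is that once the appropriate decoration (a directed edge, an edge, a $4$-cycle, or a directed/blue-edged configuration, depending on the symmetry of $F_{13}$) is fixed on $\K$, the three remaining edges of $F_{13}$ are forced up to the combinatorial data, using that $F_{13}$ is cubic together with a girth/adjacency argument exactly analogous to the ones invoked for $F_{11}$ and $F_{12}$.

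Next I would invoke the relevant classification lemma — Lemma \ref{lem:ClassEdgedK33}, Lemma \ref{lem:ClassDirEdgeK33}, or Lemma \ref{lem:ClassCycleK33} — to reduce to a finite, small number of decorated embeddings of $\K$ to analyze (five, six, or some multiple thereof once decorated). For each case I would attempt to add the three missing edges one at a time. The main tool here is Proposition \ref{lem:JCT_consequenc}: since every face of an embedding of $\K$ into the torus is a disk, an edge can only be drawn between two vertices lying on the boundary of a common face, and when a vertex appears only once on that boundary the edge is determined up to equivalence; each added edge then subdivides a disk face into two components via the Jordan Curve Theorem, constraining where the subsequent edges may go. In several cases this analysis will show that the partial embedding cannot be completed (some pair of endpoints fails to share a face), eliminating that case; in the surviving cases it will yield one or more concrete embeddings of $F_{13}$.

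Having produced a finite list of candidate embeddings of $F_{13}$, I would then show that each is equivalent to one of the two embeddings asserted in the lemma. Following the $F_{11}$ and $F_{12}$ arguments, the cleanest way is the decomposition-and-regluing technique: cut each target embedding into its facial polygons, record the vertex and edge identifications, and exhibit for each candidate a labeling of its faces that matches one of the two target gluing patterns, thereby producing the required self-homeomorphism of the torus. Finally, to show the two claimed embeddings are genuinely inequivalent, I would exhibit a homeomorphism invariant distinguishing them — most naturally the multiset of facial cycle lengths, or, if those coincide, a finer invariant built from how distinguished vertices (those lying on two facial $4$-cycles) are distributed along the longer facial cycles, precisely in the spirit of the remark preceding Lemma \ref{lem:F11TorusEmbeddings}.

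I expect the main obstacle to be the bookkeeping in the case analysis: verifying for each decorated $\K$-embedding which faces contain the required endpoint pairs, and confirming that the completions obtained really do exhaust all possibilities up to equivalence. The subtle point is ensuring no embedding is missed when a vertex appears more than once on a face boundary, since then there can be several inequivalent ways to draw an edge (as in Case 4a for $F_{12}$); each such branch must be carried through separately. The regluing step that matches a completed embedding to a target is conceptually routine but error-prone, so the careful construction of the matching face-labelings is where most of the real work will lie.
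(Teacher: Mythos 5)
Your proposal follows essentially the same approach as the paper: the paper fixes a subdivision of $\K$ inside $F_{13}$, decorates it exactly as in the $F_{12}$ case (twelve blue directed edged embeddings via Lemma \ref{lem:ClassDirEdgeK33}), runs the face-by-face edge-addition analysis to obtain seven candidate embeddings, and matches each to one of the two targets by the decomposition-and-regluing technique, distinguishing the targets by their facial cycle length multisets. Your plan is correct in outline and identifies all the right tools and pitfalls; the only detail left open is which decoration applies to $F_{13}$, and the paper resolves this by observing that its chosen $\K$-subdivision is the same unlabelled graph as the one used for $F_{12}$.
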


\begin{figure}[H]
    \centering
    \includegraphics[height=3cm]{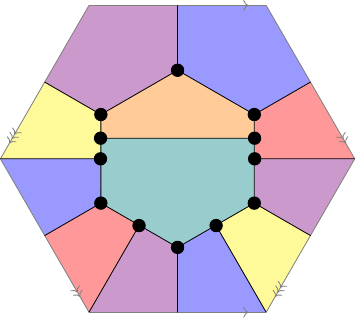} \hspace{.5cm}
    \includegraphics[height=3cm]{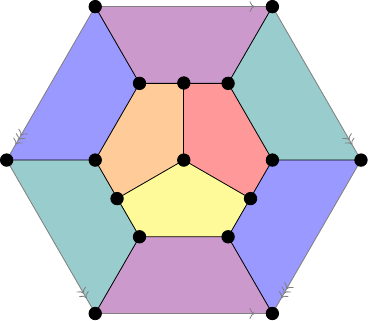}
    \caption{The two embeddings of $F_{13}$ into the torus}
    \label{fig:TwoF13Embeddings}
\end{figure}

\begin{remark}
    \text{}
    \begin{enumerate}
        \item The embedding on the left has three facial cycles of length five, two facial cycles of length six, and one facial cycles of length nine. In particular, it is the only embedding with a facial $9$-cycle and thus cannot be equivalent to the other embedding. We will refer to it as the \textit{$9$-cycle embedding}.
        \item The embedding on the right has three facial cycles of length five, and three facial cycles of length seven. In particular, it is the only embedding with a facial $7$-cycle. We will refer to it as the \textit{$7$-cycle embedding}.
    \end{enumerate}
\end{remark}

\begin{proof}
    Similar to the other graphs we have analysed in Sections \ref{sec:F11Lemma} and \ref{sec:F12Lemma}, we fix a subgraph $H$ of $F_{13}$ homeomorphic to $\K$ as shown below.
    \begin{figure}[H]
        \centering
        \includegraphics[height=2cm]{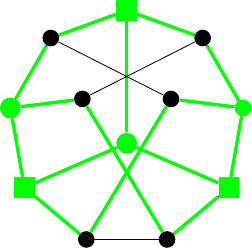} \hspace{.5cm}
        \includegraphics[height=2cm]{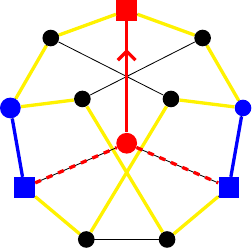}
    \end{figure}
    The subdivision of $\K$ is shown below, where $\K$ is drawn as the Möbius ladder on three rungs. Note that the unlabelled versions of the graphs on the left in Figures \ref{fig:F12Mobius} and \ref{fig:F13Mobius} are identical.
    Once we have the pictured subdivision of $\K$, there is a unique way to add the remaining edges to obtain $F_{13}$ by recalling that $F_{13}$ is cubic and has girth five.
    \begin{figure}[H]
        \centering
        \includegraphics[height=2cm]{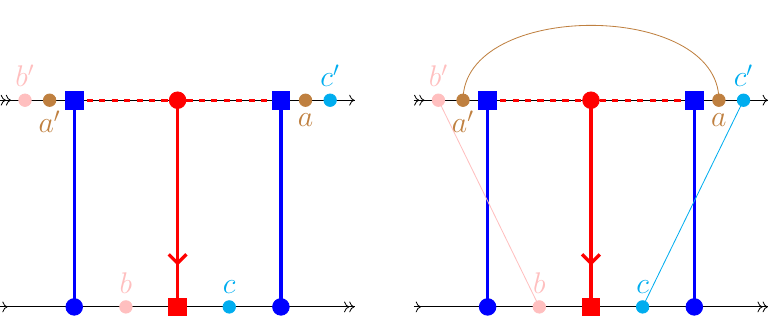}
        \caption{Building $F_{13}$ from $\K$}
        \label{fig:F13Mobius}
    \end{figure}

    \noindent In the same way as in the proof of Lemma \ref{lem:F12TorusEmbeddings} we get twelve \textit{blue directed edged embeddings} of the subdivision of $\K$.
    \begin{center}
        \includegraphics[height=2.25cm]{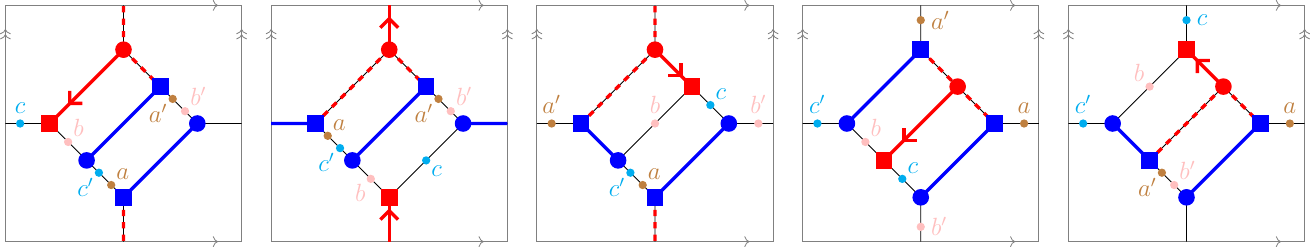}
        \includegraphics[height=2.25cm]{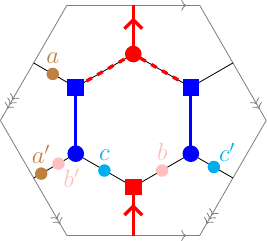}
    \end{center}
    \begin{figure}[H]
        \centering
        \includegraphics[height=2.25cm]{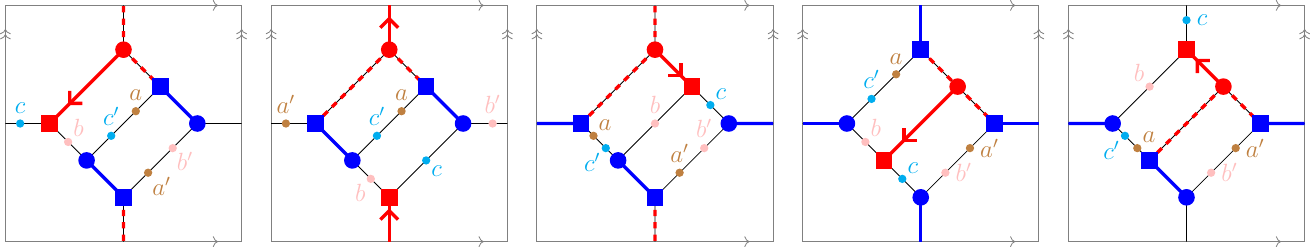}
        \includegraphics[height=2.25cm]{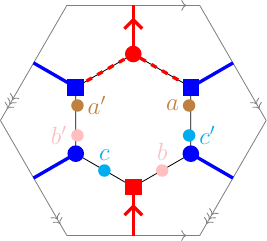}
        \caption{Subdivisions of $\K$ in $F_{13}$}
        \label{fig:K33InF13Cases}
    \end{figure}

    The cases are labelled 1a, 2a, etc. across the top row, and 1b, 2b, etc. across the bottom row.
    We observe that not only are the unlabelled subdivision of $\K$ from which we build $F_{12}$ and $F_{13}$ identical, but removing the edge $aa'$ from $F_{12}$ and $F_{13}$ gives homeomorphic graphs. Moreover, the labelled subdivisions from Figures \ref{fig:K33InF12Cases} and \ref{fig:K33InF13Cases} are closely related. To get from one to the other, one only has to swap vertices $a'$ and $b'$, and vertices $a$ and $c'$, respectively.
    As the case analysis here is similar to the case analysis in the proof of Lemma \ref{lem:F12TorusEmbeddings}, we omit the details. Cases 1a, 3b, and 4a each result in one embedding of $F_{13}$, while Cases 5a and 6b each result in two embeddings of $F_{13}$. See Figures \ref{fig:F13Cases1a5a6b} and \ref{fig:F13Cases3b4a} for pictures of these embeddings. 
    Using the same technique as in the previous sections, we will now show that each of these seven embeddings of $F_{13}$ into the torus is equivalent to one of the two embeddings in Figure \ref{fig:TwoF13Embeddings}. 

    \noindent Consider the three pentagons, two hexagons, and one nonagon below. Identifying vertices with the same label and edges between the same pair of vertices, we obtain the $9$-cycle embedding of $F_{13}$ from Figure \ref{fig:TwoF13Embeddings}.
    \begin{figure}[H]
        \centering
        \includegraphics[height=1.75cm]{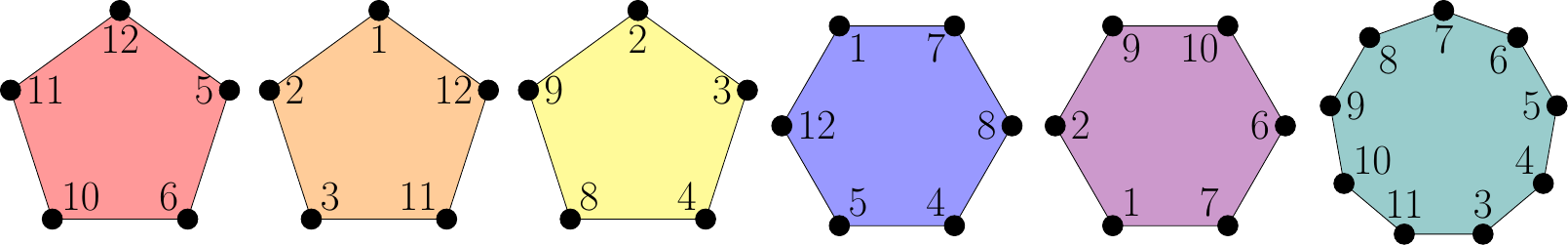} \hspace{.25cm}
        \includegraphics[height=2.75cm]{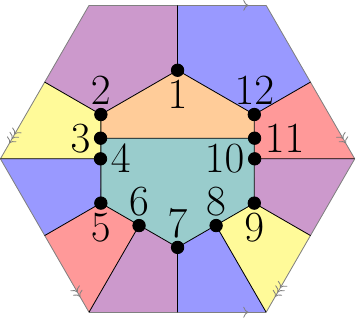}
        \caption{Decomposing the $9$-cycle embedding of $F_{13}$}
        \label{fig:9CycleF13Decomp}
    \end{figure}

    We observe that we can decompose the five embeddings of $F_{13}$ resulting from Cases 1a, 5a and 6b into three pentagons, two hexagons and one nonagon. 
    Moreover, we can label the vertices of these embeddings and decompose the torus as in Figure \ref{fig:9CycleF13Decomp}. This implies the existence of a homeomorphism of the torus that realizes an equivalence of the embedding from these cases and the known $6$-cycle embedding of $F_{12}$.
    \begin{figure}[H]
        \centering
        \includegraphics[height=2.75cm]{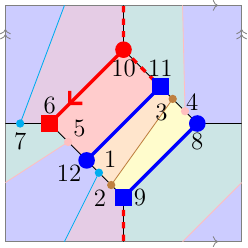}
        \includegraphics[height=2.75cm]{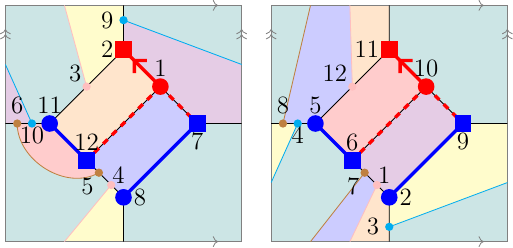}
        \includegraphics[height=2.75cm]{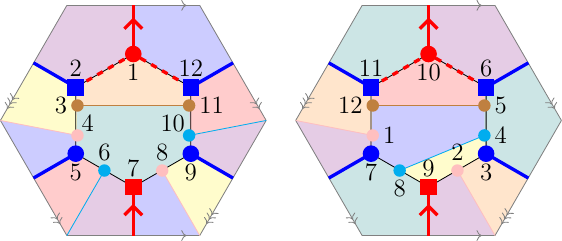}
        \caption{Embeddings of $F_{13}$ from Cases 1a, 5a, and 6a}
        \label{fig:F13Cases1a5a6b}
    \end{figure}

    Similarly, consider the three pentagons and three heptagons below. Note that each vertex appears three times, and each edge appears twice. Identifying vertices with the same label and edges between the same pair of vertices, we obtain the $7$-cycle embedding of $F_{13}$ from Figure \ref{fig:TwoF13Embeddings}.
    \begin{figure}[H]
        \centering
        \includegraphics[height=1.75cm]{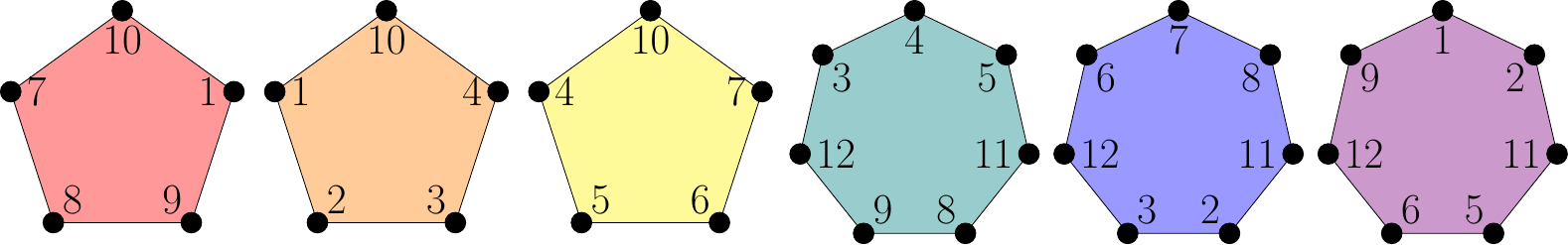} \hspace{.25cm}
        \includegraphics[height=2.75cm]{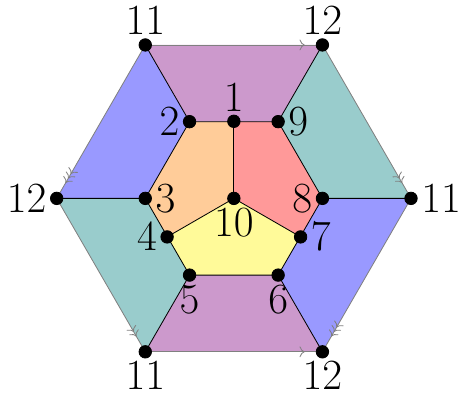}
        \caption{Decomposing the $7$-cycle embedding of $F_{13}$}
        \label{fig:7CycleF13Decomp}
    \end{figure}

    We can decompose the two embeddings of $F_{13}$ resulting from Cases 3b and 4a into three pentagons and four heptagons. Moreover, we can label the vertices of these embeddings and decompose the torus as in Figure \ref{fig:7CycleF13Decomp}. Hence, the embeddings from these cases are equivalent to the known $7$-cycle embedding of $F_{13}$.
        %\newcolumn
        \begin{figure}[H]
            \centering
            \includegraphics[height=2.75cm]{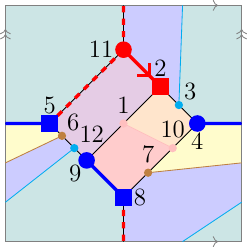}
            \includegraphics[height=2.75cm]{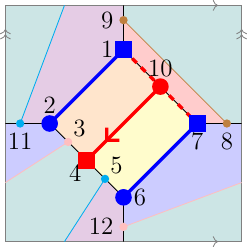}
            \caption{Embeddings of $F_{13}$ from Cases 3b and 4a}
            \label{fig:F13Cases3b4a}
        \end{figure}
    %\end{multicols}

    \noindent We can now conclude that there are at most two inequivalent embeddings of $F_{13}$ into the torus, namely the ones depicted in Figure \ref{fig:TwoF13Embeddings}. The remarks preceding the proof of Lemma \ref{lem:F13TorusEmbeddings} imply that these two embeddings are inequivalent, finishing the proof.   
    
\end{proof}

\section{Toroidal embeddings of \texorpdfstring{$F_{14}$}{F\_{14}}}
\label{sec:F14Lemma}

\setcounter{claim}{0}

\begin{lemma}
\label{lem:F14TorusEmbeddings}
    There are exactly two inequivalent unlabelled embeddings of $F_{14}$ into the torus.
\end{lemma}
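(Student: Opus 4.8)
The plan is to follow the same template established in Sections \ref{sec:F11Lemma}--\ref{sec:F13Lemma}: reduce the classification of toroidal embeddings of $F_{14}$ to the finite list of decorated $\K$-embeddings provided by Lemma \ref{lem:ClassDirEdgeK33} (or Lemma \ref{lem:ClassEdgedK33}/Lemma \ref{lem:ClassCycleK33}, whichever matches the natural $\K$-subgraph), and then check extendability case by case. First I would exhibit a subgraph $H \subset F_{14}$ homeomorphic to $\K$, drawn as the M\"obius ladder on three rungs, and verify that the subdivision pattern plus the cubicity and girth of $F_{14}$ determine uniquely how the three remaining edges $aa', bb', cc'$ must be attached. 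This is the step that anchors everything: picking the distinguished (directed, blue-decorated) edge data pins down $F_{14}$ completely, so the problem becomes \textbf{which} of the twelve (or however many) blue directed edged embeddings of $\K$ can be extended by adding $aa', bb', cc'$.

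Next I would run the extendability analysis. For each decorated $\K$-embedding, every face is a disk (all toroidal $\K$-embeddings are cellular), so Proposition \ref{lem:JCT_consequenc} governs how drawing an edge between two boundary vertices subdivides a face. In each case I would identify, for each of the three edges to be added, the unique face (if any) on whose boundary both endpoints appear, noting whether a vertex appears once or twice on that boundary (the latter giving two inequivalent ways to route the edge). Cases where some required pair of vertices never share a face boundary after the forced edges are added are discarded as non-extendable; the surviving cases each yield one or two genuine embeddings of $F_{14}$. I would expect, by analogy with $F_{13}$, that only a handful of cases survive and together they produce the two embeddings pictured (to be displayed in a figure analogous to Figure \ref{fig:TwoF13Embeddings}), distinguished by their facial-cycle length multisets.

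The final step is to show that all surviving embeddings coincide, up to equivalence, with exactly two inequivalent embeddings. As in the earlier sections, I would do this by decomposing each of the two target embeddings into its facial $n$-gons, recording how the polygons glue (each vertex appearing three times, each edge twice), and then verifying that every embedding obtained from a surviving case admits a relabelling realizing the same gluing pattern on the torus; such a relabelling exhibits the required homeomorphism. Inequivalence of the two final embeddings would follow from the preceding remark by comparing facial-cycle lengths (one embedding has a facial cycle whose length does not occur in the other, since homeomorphisms of the torus preserve facial-walk lengths).

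The main obstacle I anticipate is the bookkeeping in the extendability step: ensuring for each decorated $\K$-embedding that I have correctly and exhaustively determined the faces available to each of $aa', bb', cc'$, and crucially that the order of adding edges is handled correctly, since adding one edge (via Proposition \ref{lem:JCT_consequenc}) splits a face and can forbid or force the placement of the subsequent edges. The subtle subcases are those where an endpoint appears twice on a face boundary, producing two candidate routings, of which one may obstruct a later edge while the other does not; these require careful case splitting exactly as in Case 4a of the $F_{12}$ argument. Much of this is routine once the framework is set up, so I would state the close analogy with the $F_{12}$ and $F_{13}$ analyses and omit the repetitive details, presenting only the surviving cases and their gluing decompositions.
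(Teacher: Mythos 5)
Your proposal is correct and follows essentially the same approach as the paper: fix a subgraph of $F_{14}$ homeomorphic to $\K$, run the case-by-case extendability analysis over the classified decorated embeddings using the cellularity of faces and Proposition \ref{lem:JCT_consequenc}, and then identify the surviving embeddings with two target embeddings via facial polygon decompositions, distinguishing those two by facial-walk lengths. The only point you left open is which decoration is the right one: for $F_{14}$ the natural choice is the highlighted facial $4$-cycle, so the paper invokes Lemma \ref{lem:ClassCycleK33} and works through five cycled embeddings rather than twelve blue directed edged ones, which shortens the bookkeeping you were (rightly) worried about.
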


\begin{figure}[H]
    \centering
    \includegraphics[height=3cm]{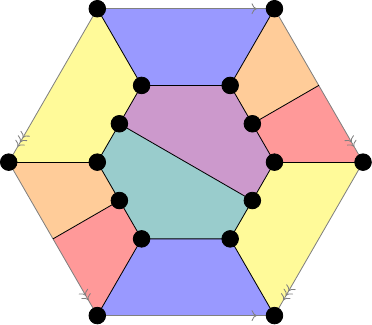} \hspace{.25cm}
    \includegraphics[height=3cm]{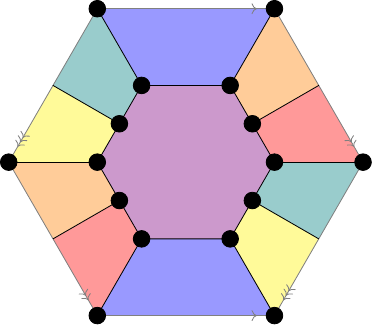}
    \caption{The two embeddings of $F_{14}$ into the torus}
    \label{fig:TwoF14Embeddings}
\end{figure}

\newpage
\begin{remark}
    \text{}
    \begin{enumerate}
        \item The embedding on the left has two facial cycles of length five, three facial cycles of length six, and one facial cycles of length eight. In particular, it is the only embedding with a facial $8$-cycle and thus cannot be equivalent to the other embedding. We will refer to it as the \textit{$8$-cycle embedding}.
        \item The embedding on the right has four facial cycles of length five, one facial cycle of length six, and one facial cycle of length ten. In particular, it is the only embedding with a facial $10$-cycle. We will refer to it as the \textit{$10$-cycle embedding}.
    \end{enumerate}
\end{remark}

\begin{proof}
    Similar to the previously analysed graphs, we fix a subgraph $H$ of $F_{14}$ homeomorphic to $\K$ as shown below.
    \begin{figure}[H]
        \centering
        \includegraphics[height=2cm]{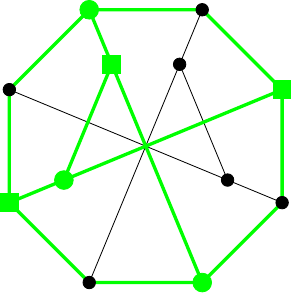} \hspace{.25cm}
        \includegraphics[height=2cm]{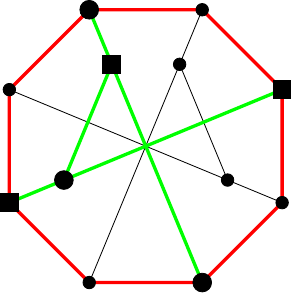}
    \end{figure}
    
    The subdivision of $\K$ is shown below, where $\K$ is drawn as the Möbius ladder on three rungs. To build $F_{14}$, choose one of the facial $4$-cycles, subdivide each of its edges once, and connect the new vertices of degree two on disjoint edges. The two new edges then get subdivided once, and the new vertices get connected.
    \begin{figure}[H]
        \centering
        \includegraphics[height=2.5cm]{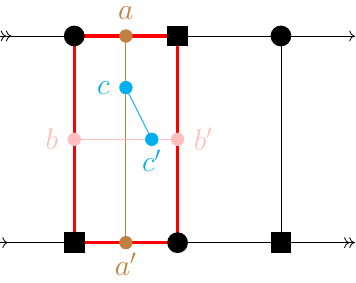} 
    \end{figure}

    \noindent By Lemma \ref{lem:ClassCycleK33}, there are five inequivalent cycled embeddings $(\K, C_4) \hookrightarrow T^2$. As described above, picking a $4$-cycle in $\K$ completely determines how to build $F_{14}$. Hence, we analyse the five cases below and check which ones can be extended to an embedding of $F_{14}$.
    \begin{figure}[H]
        \centering
        \includegraphics[width=\textwidth]{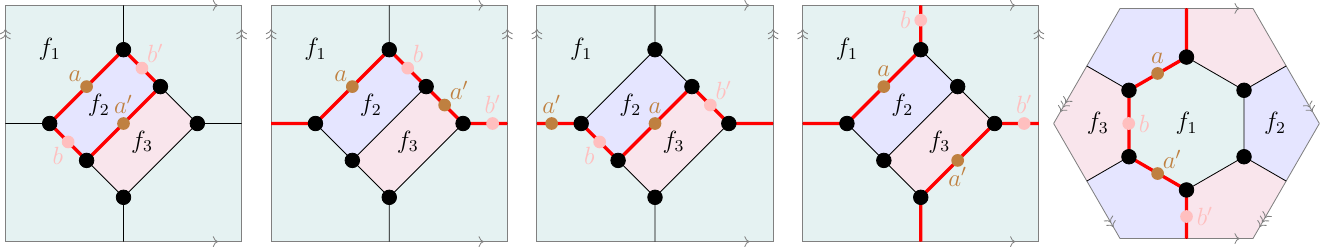}
        \caption{Subdivisions of $\K$ in $F_{14}$}
        \label{fig:K33InF14Cases}
    \end{figure}

    We immediately see that in Case 3 the edge $aa'$ cannot be added. Thus, this embedding does not extend to an embedding of $F_{14}$.

    \noindent Note that since all embeddings of $\K$ into the torus are cellular, each face is homeomorphic to a disk. Thus, if two vertices are on the boundary of a common face and neither of them appears more than once, there is a unique way to add an edge between the vertices.

    \begin{itemize}
        \item Case 1: $f_2$ is the unique face such that the vertices $a$ and $a'$ appear on its boundary, while face $f_1$ is the unique face such that vertices $b$ $b'$ appear on its boundary. This leads to vertex $c$ to be in face $f_2$, and vertex $c'$ to be in face $f_1$. Therefore, we cannot add the edge $cc'$ and the given embedding of a subdivision of $\K$ cannot be extended to an embedding of $F_{14}$. 
            \begin{figure}[H]
                \centering
                \includegraphics[height=3cm]{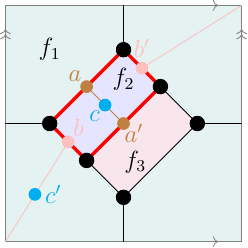} 
                \label{fig:F14Case1}
            \end{figure}

        \item Case 2: $f_1$ is the unique face such that the vertices $a$ and $a'$ appear on its boundary, and each vertex appears exactly once. Thus, there exists a unique way to draw the edge $aa'$. Now, $f'_1$ is the unique face such that the vertices $b$ and $b'$ appear on its boundary, and each vertex appears exactly once. So, there exists a unique way to draw the edge $bb'$. Finally, $f^{(3)}_1$ is the unique face such that vertices $c, c'$ appear on its boundary, and they each appear exactly once. Hence, we get the embedding of $F_{14}$ pictured below.
            \begin{figure}[H]
                \centering
                \includegraphics[height=3cm]{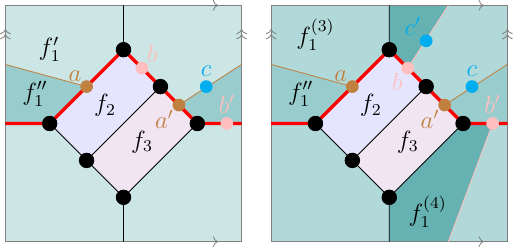} \hspace{.5cm}
                \includegraphics[height=3cm]{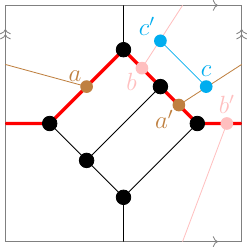}
                \label{fig:F14Case2}
            \end{figure}

        \item Case 4: Similar to Case 2, there is a unique way to draw the edges $aa'$. Now, the vertices $b$ and $b'$ both appear once on the boundary of face $f'_1$ and once on the boundary of face $f''_1$. Hence, there are two ways to draw the edge $bb'$. For each of the two choices, there is then a unique way to draw the edge $cc'$, giving the two embeddings of $F_{14}$ pictured below.
            \begin{figure}[H]
                \centering
                \includegraphics[height=3cm]{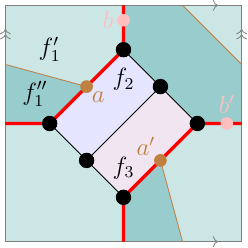} \hspace{.5cm}
                \includegraphics[height=3cm]{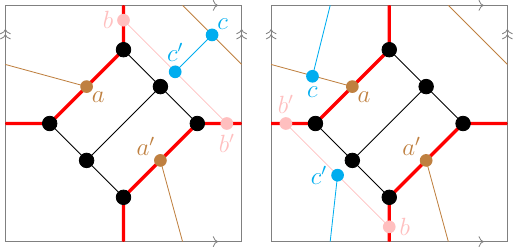}
                %\caption{}
                \label{fig:F14Case4}
            \end{figure}

        \item Case 5: $f_3$ is the unique face such that the vertices $b, b'$ appear on its boundary, and each vertex appears exactly once. So there is a unique way to add the edge $bb'$. Note that this puts vertex $c'$ into face $f_3$. Now, vertices $a, a'$ appear exactly once each on the boundary of $f_1$ and $f_2$. This will lead to vertex $c$ to either be in face $f_1$ or $f_2$. In either case, it is impossible to add the edge $cc'$, and hence the given embedding of a subdivision of $\K$ cannot be extended to and embedding of $F_{14}$.
            \begin{figure}[H]
                \centering
                \includegraphics[height=2.5cm]{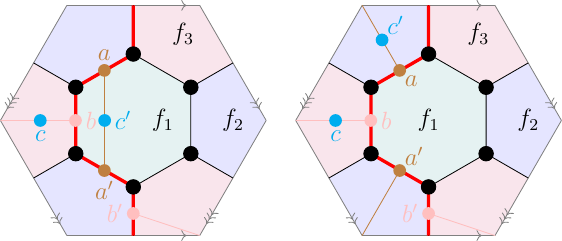}
                \label{fig:F14Case5}
            \end{figure}

    \end{itemize}

    Using the same technique as in the previous sections, we will now show that each of these three embeddings of $F_{14}$ into the torus is equivalent to one of the two embeddings in Figure \ref{fig:TwoF14Embeddings}.

    \noindent Consider the two pentagons, three hexagons, and one octagon below. Identifying vertices with the same label and edges between the same pair of vertices, we obtain the $8$-cycle embedding of $F_{14}$ from Figure \ref{fig:TwoF14Embeddings}.
    \begin{figure}[H]
        \centering
        \includegraphics[height=1.75cm]{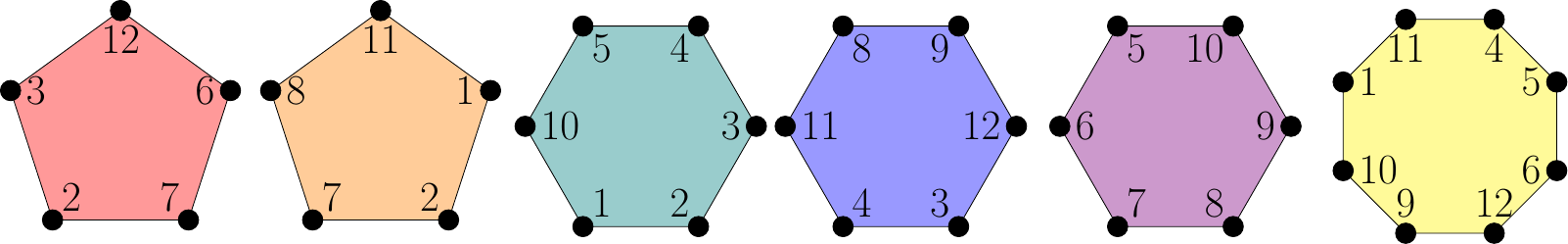} \hspace{.25cm}
        \includegraphics[height=2.5cm]{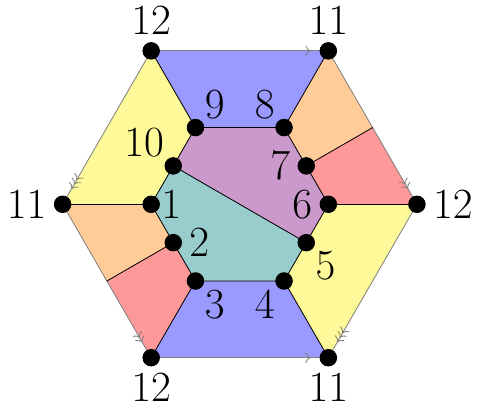}
        \caption{Decomposing the $8$-cycle embedding of $F_{14}$}
        \label{fig:8CycleF14Decomp}
    \end{figure}

    We now observe that we can decompose the embedding of $F_{14}$ resulting from Case 2 into two pentagons, three hexagons, and one octagon. 
        Moreover, we can label the vertices of these embeddings and decompose the torus as in Figure \ref{fig:8CycleF14Decomp}. This implies the existence of a homeomorphism of the torus that realizes an equivalence of the embedding from this case and the known $8$-cycle embedding of $F_{14}$.
        \begin{center}
            \includegraphics[height=3cm]{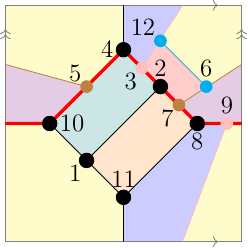}
        \end{center}

    Similarly, consider the four pentagons, one hexagon and once decagon below. Identifying vertices with the same label and edges between the same pair of vertices, we obtain the $10$-cycle embedding of $F_{14}$ from Figure \ref{fig:TwoF13Embeddings}.
    \begin{figure}[H]
        \centering
        \includegraphics[height=1.75cm]{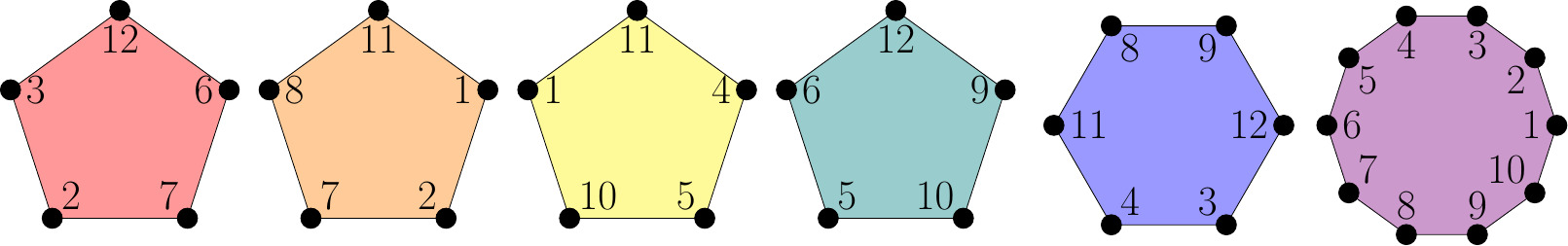} \hspace{.25cm}
        \includegraphics[height=2.5cm]{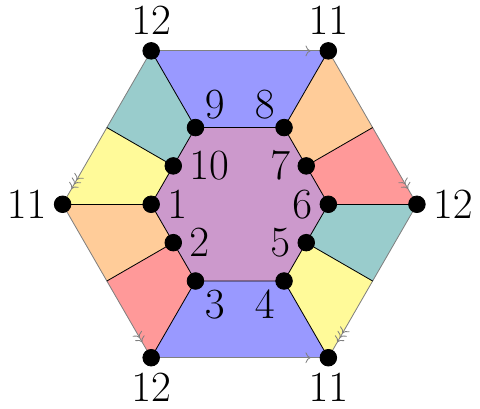}
        \caption{Decomposing the $10$-cycle embedding of $F_{14}$}
        \label{fig:10CycleF14Decomp}
    \end{figure}

    We decompose the two embeddings of $F_{14}$ resulting from Case 4 into four pentagons, one hexagon and once decagon. Moreover, we can label the vertices of these embeddings and decompose the torus as in Figure \ref{fig:7CycleF13Decomp}. Hence, the embeddings from this case are equivalent to the known $10$-cycle embedding of $F_{14}$.
        \begin{center}
            \includegraphics[height=3cm]{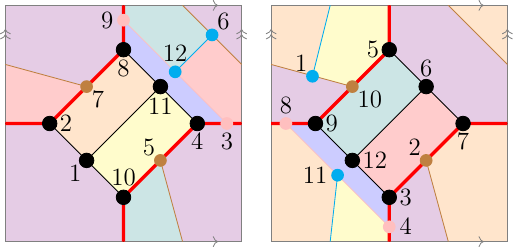}
        \end{center}

    \noindent We can now conclude that there are at most two inequivalent embeddings of $F_{43}$ into the torus, namely the ones depicted in Figure \ref{fig:TwoF14Embeddings}. The remarks preceding the proof of Lemma \ref{lem:F14TorusEmbeddings} imply that these two embeddings are inequivalent, finishing the proof.   
    
\end{proof}

\section{Toroidal embeddings of \texorpdfstring{$G_{1}$}{G\_{1}}}
\label{sec:G1Lemma}

\setcounter{claim}{0}

\begin{lemma}
\label{lem:G1TorusEmbeddings}
    There are exactly two inequivalent unlabelled embeddings of $G_1$ into the torus.
\end{lemma}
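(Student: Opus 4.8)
The plan is to follow the same strategy used for $F_{11}$, $F_{13}$, and $F_{14}$: reduce the classification of toroidal embeddings of $G_1$ to the already-established classification of decorated embeddings of $\K$, then extend each case and collapse the resulting list using explicit decompositions. First I would record the relevant structural facts about $G_1$ as a cubic graph: its girth, the fact that (being a cubic nonprojective obstruction) it is nonplanar and hence contains a subdivision $H$ of $\K$, and the fact that the faces of every toroidal embedding of $\K$ are disks (cellularity), so that adding an edge between two vertices lying once each on a common face boundary is unique up to equivalence by Proposition \ref{lem:JCT_consequenc}. The key preliminary step is to identify a canonical subgraph $H \cong \K$ with $V(H)=V(G_1)$ such that deleting the extra rungs and then re-adding them is forced by the cubic-plus-girth condition; as in the other sections I would draw $H$ as the M\"obius ladder on three rungs and verify that, once the subdivision pattern is fixed, the remaining edges $aa', bb', cc'$ (or the analogous set) are uniquely determined.

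Next I would determine which of the decorated $\K$-embeddings from Lemma \ref{lem:ClassDirEdgeK33}, Lemma \ref{lem:ClassEdgedK33}, or Lemma \ref{lem:ClassCycleK33} is the correct starting decoration for $G_1$. The choice depends on how the extra structure of $G_1$ attaches to $H$: if the subdivision is pinned down by a single directed edge together with a chosen disjoint pair of edges (as for $F_{12}$ and $F_{13}$) I would use the twelve blue directed edged embeddings, whereas if it is pinned down by a distinguished $4$-cycle (as for $F_{14}$) I would use the five cycled embeddings of Lemma \ref{lem:ClassCycleK33}. Having fixed the finite list of decorated embeddings, I would go through each case and attempt to add the remaining edges, using cellularity and Proposition \ref{lem:JCT_consequenc} to argue that at each step the number of admissible faces is forced, so each case either fails (some required pair of endpoints ends up separated into distinct faces) or yields a small number of genuine extensions. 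This produces a finite list of candidate embeddings of $G_1$.

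The final and most delicate step is the collapse: showing that every candidate extension is equivalent to one of the two embeddings asserted in the statement. For this I would decompose each target embedding into its facial polygons, present the gluing data (which vertices and edges are identified to reconstruct the torus), and then exhibit a labelling of each candidate extension realizing the same polygonal decomposition and gluing, thereby producing the required homeomorphism of the torus. Separately I would distinguish the two surviving embeddings by a homeomorphism invariant---a distinguishing facial-cycle length spectrum, as with the $8$-cycle versus $10$-cycle embeddings of $F_{14}$---so that no further collapse occurs and exactly two classes remain.

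The hard part will be the bookkeeping in the collapse step: unlike $F_{11}$, whose two embeddings have the same facial-length multiset and require the subtle distance-in-$C^{ad}$ invariant, $G_1$ may have several combinatorially distinct-looking extensions that are nevertheless equivalent, and verifying each equivalence requires producing an explicit consistent labelling of the polygonal pieces. I expect the main obstacle to be confirming that the face-by-face edge-addition in the surviving cases really is forced (no overlooked admissible face) and that the polygonal decompositions of the candidates genuinely match the decompositions of the two claimed embeddings up to the homeomorphism, rather than merely having matching facial-length spectra.
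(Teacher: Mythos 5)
Your overall strategy matches the paper's: for $G_1$ one fixes a subdivision $H \cong \K$ with $V(H)=V(G_1)$, pins it down by a directed edge $\vec e$ (the edge starting at the unique vertex two of whose incident edges get subdivided twice, with $\vec e$ itself unsubdivided), runs through the six directed edged embeddings of Lemma \ref{lem:ClassDirEdgeK33}, finds that three cases fail and three each yield two extensions, and then collapses the six candidates to two classes via polygonal decompositions (two squares, two hexagons, one decagon). Your plan correctly anticipates all of this, including the forced-face arguments and the decomposition-and-relabelling method for the collapse.

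There is, however, one concrete step in your proposal that would fail: you propose to separate the two surviving embeddings by their facial-cycle length spectra, ``as with the $8$-cycle versus $10$-cycle embeddings of $F_{14}$.'' For $G_1$ this invariant is useless --- both embeddings have exactly two facial $4$-cycles, two facial $6$-cycles, and one facial $10$-cycle, so their face-length multisets coincide. Without a finer invariant you would only have proved that there are \emph{at most} two classes, not exactly two. The paper's fix is to look at the two facial $4$-cycles in each embedding and count the edges joining a vertex of one to a vertex of the other: in one embedding there are two such edges, in the other only one, and since any homeomorphism of the torus must carry facial $4$-cycles to facial $4$-cycles this count is an invariant of the equivalence class. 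You already recognized (for $F_{11}$) that coinciding face-length multisets force one to find a subtler invariant; $G_1$ is another instance of exactly that situation, so your proposal needs the analogous refinement here before the lower bound of two is established.
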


\begin{figure}[H]
    \centering
    \includegraphics[height=3cm]{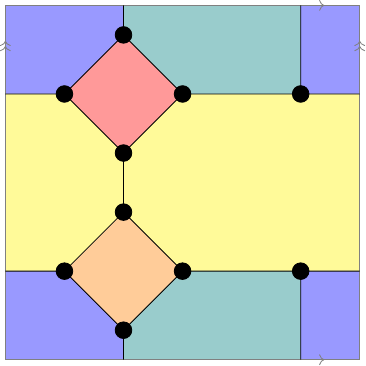} \hspace{.1cm}
    \includegraphics[height=3cm]{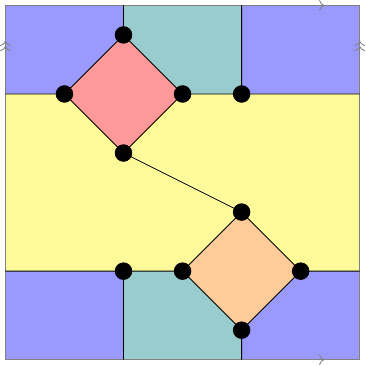}
    \caption{Two embeddings of $G_1$ into the torus}
    \label{fig:TwoG1Embeddings}
\end{figure}

\begin{remark}
    Both embeddings have two facial cycles of length four, two facial cycles of length six, and one facial cycle of length ten.
    To distinguish between the two, we consider the facial cycles of length four. Denote these facial cycles in the embedding on the left by $C_L$ and $C'_L$, and the ones in the embedding on the right by $C_R$ and $C'_R$. Now we observe that there are two distinct edges connecting a vertex in $C_L$ to a vertex in $C'_L$, while there is only one edge connecting a vertex in $C_R$ to one in $C'_R$.  
    As a homeomorphism of the torus would have to map an edge connecting facial $4$-cycles to an edge connecting facial $4$-cycles, the two embeddings cannot be equivalent. We will refer to the first of these two embeddings as the \textit{2-embedding}, and the other as the \textit{1-embedding}. 
\end{remark}

\begin{proof}
    Similar to the other graphs we have analysed in Sections \ref{sec:F11Lemma} -- \ref{sec:F14Lemma}, we fix a subgraph $H$ of $G_1$ homeomorphic to $\K$ as shown below.
        \begin{figure}[H]
        \centering
        \includegraphics[height=2.5cm]{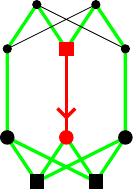}
    \end{figure}

    \noindent The subdivision of $\K$ is shown below, where $\K$ is drawn as the Möbius ladder on three rungs. Note that the directed edge $\vec e$ starts at the unique vertex two of whose incident edges get subdivided twice. Moreover, $\vec e$ does not get subdivided.
    Once we have the pictured subdivision of $\K$, there is a unique way to add the remaining edges to obtain $G_1$ by recalling that $G_1$ is cubic and has girth four.
    \begin{figure}[H]
        \centering
        \includegraphics[height=2cm]{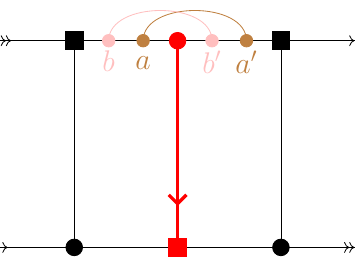}
    \end{figure}

    \noindent By Lemma \ref{lem:ClassDirEdgeK33}, there are six inequivalent directed edged embeddings $(\K, \vec e) \hookrightarrow T^2$. As described above, picking one edge in $\K$ completely determines how to build $G_1$. Hence, we analyse the six cases below and check which can be extended to an embedding of $G_{1}$.
    \begin{figure}[H]
        \centering
        \includegraphics[width=\textwidth]{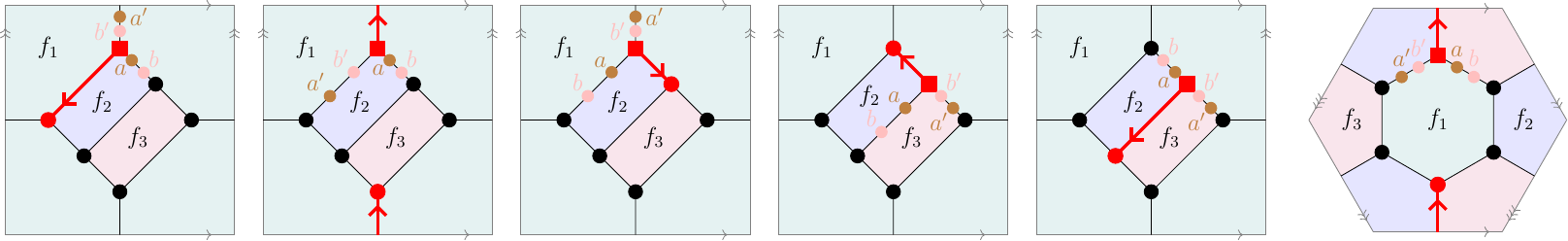}
        \caption{Subdivisions of $\K$ in $G_{1}$}
        \label{fig:K33InG1Cases}
    \end{figure}

     \noindent Note that since all embeddings of $\K$ into the torus are cellular, each face is homeomorphic to a disk. Thus, if two vertices are on the boundary of a common face and neither of them appears more than once, there is a unique way to add an edge between the vertices.

    \begin{itemize}
        \item Case 1: $f_1$ is the unique face such that the vertices $a$ and $a'$ appear on its boundary. $f_1$ is also the unique face with vertices $b$ and $b'$ to appear on its boundary. Moreover, vertices $a'$ and $b'$ appear twice each. Thus, there are two ways to draw the edge $bb'$, and two ways to draw the edge $aa'$. It is quick to see that one of the options of adding the edge $bb'$ makes adding the edge $aa'$ impossible (left picture). Thus, we choose the other option. Then there are two ways of drawing the edge $aa'$ as vertex $a'$ appears twice on the boundary of face $f'_1$ after adding the edge $bb'$. Hence, we get the two embeddings of $G_1$ pictured below.
            \begin{figure}[H]
                \centering
                \includegraphics[height=3cm]{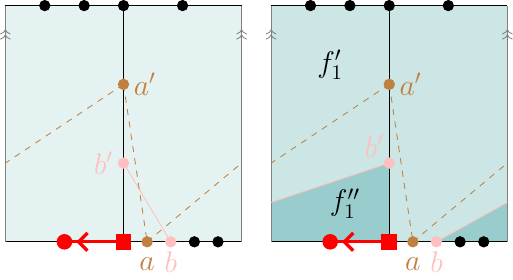} \hspace{.1cm}
                \includegraphics[height=3cm]{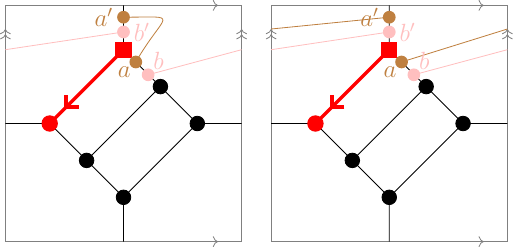}
                \label{fig:G1Case1}
            \end{figure}

        \item Case 2: Vertices $a$ and $a'$ are both on the boundary of face $f_1$ and $f_2$. Moreover, they appear exactly once one each boundary. Hence, there is a unique way to connect them through face $f_1$, and a unique way to connect them through face $f_2$. In both cases, after adding the edge $aa'$ there then is a unique face such that vertices $b$ and $b'$ appear on its boundary, and moreover each appears exactly once. Hence, we get the two embeddings of $G_1$ pictured below.
            \begin{figure}[H]
                \centering
                \includegraphics[height=3cm]{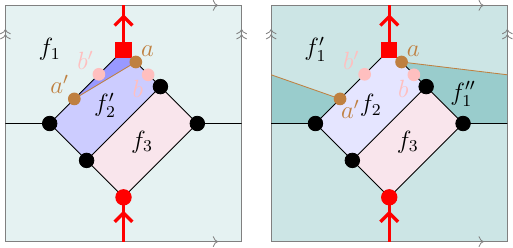} \hspace{.1cm}
                \includegraphics[height=3cm]{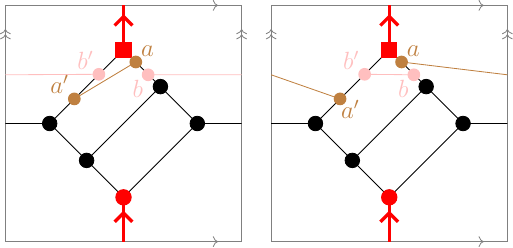}
                \label{fig:G1Case2}
            \end{figure}

        \item Case 3: As in Case 1, $f_1$ is the unique face such that the vertices $a$ and $a'$ appear on its boundary and $b$ and $b'$ appear on its boundary. As $a'$ and $b'$ appear twice, there are two ways to draw the edge $bb'$, and two ways to draw the edge $aa'$. Again, one of the options of adding the edge $bb'$ makes adding the edge $aa'$ impossible (left picture). Thus, we choose the other option, and as in Case 1 we see there are two ways of drawing the edge $aa'$, giving the two embeddings of $G_1$ pictured below.
            \begin{figure}[H]
                \centering
                \includegraphics[height=3cm]{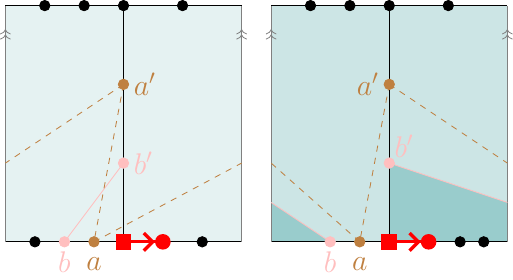} \hspace{.1cm}
                \includegraphics[height=3cm]{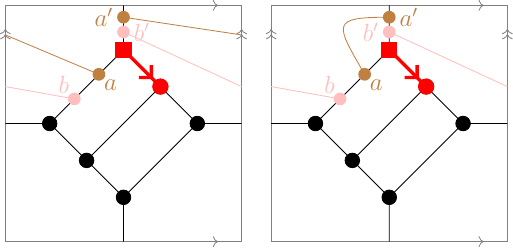}
                %\caption{}
                \label{fig:G1Case3}
            \end{figure}

        \item Case 4: $f_3$ is the unique face such that the vertices $a$ and $a'$ appear on its boundary, and each appears exactly once. Thus, there is a unique way to draw the edge $aa'$. However, now there is no face with $b$ and $b'$ on its boundary. Hence, the given embedding of a subdivision of $\K$ cannot be extended to an embedding of $G_1$.
            \begin{figure}[H]
                \centering
                \includegraphics[height=3cm]{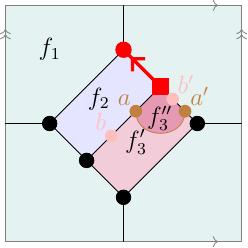}
                \label{fig:G1Case4}
            \end{figure}

        \item Case 5: Similar to Case 4, $f_1$ is the unique face such that the vertices $a$ and $a'$ appear on its boundary, and each appears exactly once. Thus, there is a unique way to draw the edge $aa'$. However, now there is no face with $b$ and $b'$ on its boundary. Hence, the given embedding of a subdivision of $\K$ cannot be extended to an embedding of $G_1$.
            \begin{figure}[H]
                \centering
                \includegraphics[height=3cm]{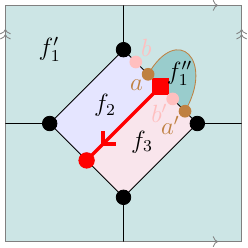}
                %\caption{}
                \label{fig:G1Case5}
            \end{figure}

        \item Case 6: $f_1$ is the unique face such that the vertices $a, a'$ appear on its boundary. $f_1$ is also the unique face with vertices $b, b'$ to appear on its boundary. Thus, there is a unique way to add each of the edges individually. However, we quickly see that once we have added the edge $aa'$, it is impossible to add the edge $bb'$. Hence, the given embedding of a subdivision of $\K$ cannot be extended to an embedding of $G_1$.
            \begin{figure}[H]
                \centering
                \includegraphics[height=3cm]{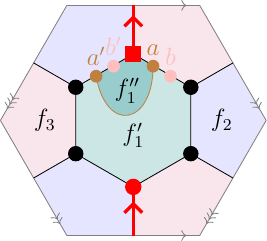} 
                \label{fig:G1Case6}
            \end{figure}
            
    \end{itemize}

    Using the same technique as in the previous sections, we will now show that each of these six embeddings is equivalent to one of the two embeddings in Figure \ref{fig:TwoG1Embeddings}. 

    \noindent Consider the two squares, two hexagons, and one decagon below. Identifying vertices with the same label and edges between the same pair of vertices, we obtain the 2-embedding of $G_1$ from Figure \ref{fig:TwoG1Embeddings}.
    \begin{figure}[H]
        \centering
        \includegraphics[height=2.25cm]{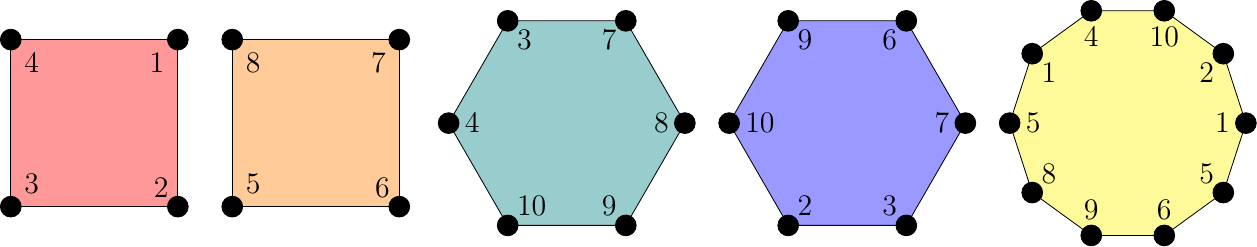} \hspace{.5cm}
        \includegraphics[height=3cm]{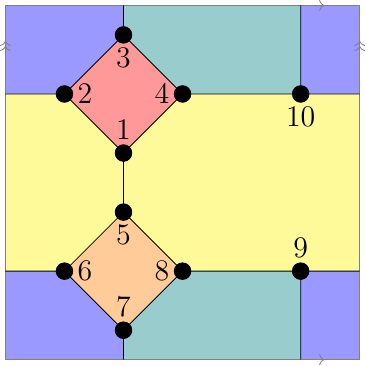}
        \caption{Decomposing the 2-embedding of $G_1$}
        \label{fig:2EmbG1Decomp}
    \end{figure}

    We now observe that we can decompose each of the embeddings of $G_1$ resulting from the above cases into two squares, two hexagons and one decagon.
    Moreover, we can label the vertices of the second embeddings of Cases 1, 2, 3 and decompose the torus as in Figure \ref{fig:1EmbG1Decomp}. This implies the existence of a homeomorphism of the torus that realizes an equivalence of the embedding from these cases and the known 1-embedding of $G_1$.
    \begin{center}
        \includegraphics[height=2.75cm]{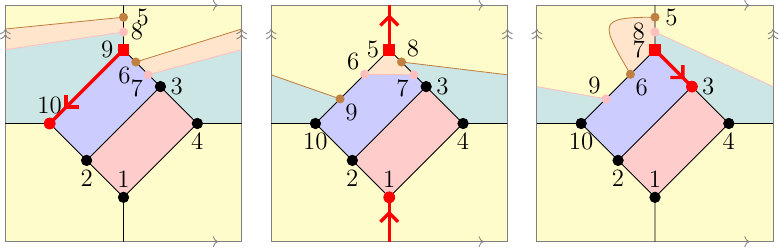}
    \end{center}

    Finally, consider the two squares, two hexagons, and one decagon below. Note that each vertex appears three times, and each edge appears twice. Identifying vertices with the same label and edges between the same pair of vertices, we obtain the 1-embedding of $G_1$ from Figure \ref{fig:TwoG1Embeddings}.
    \begin{figure}[H]
        \centering
        \includegraphics[height=2.25cm]{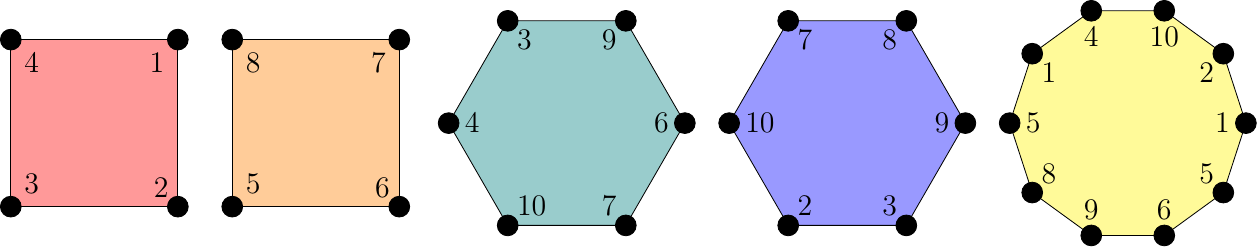} \hspace{.5cm}
        \includegraphics[height=3cm]{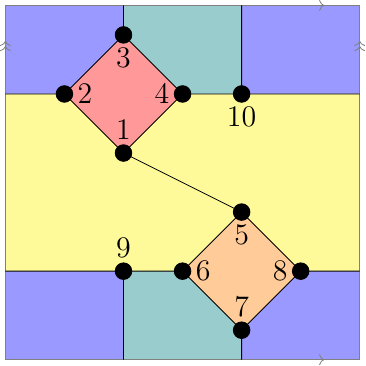}
        \caption{Decomposing the 1-embedding of $G_1$}
        \label{fig:1EmbG1Decomp}
    \end{figure}

    We can label the vertices of the first embeddings of Cases 1, 2, 3 and decompose the torus as in Figure \ref{fig:2EmbG1Decomp}. This implies the existence of a homeomorphism of the torus that realizes an equivalence of the embedding from these cases and the known 2-embedding of $G_1$.
    \begin{center}
        \includegraphics[height=2.75cm]{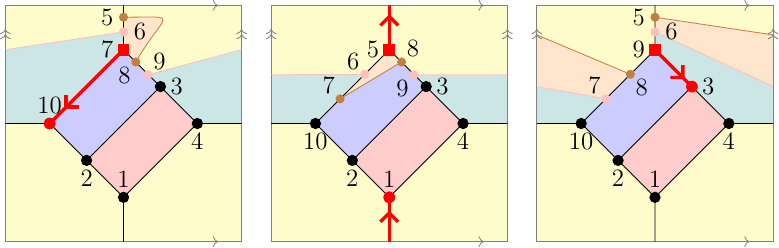}
    \end{center}

    \noindent We can now conclude that there are at most two inequivalent embeddings of $G_1$ into the torus, namely the ones depicted in Figure \ref{fig:TwoG1Embeddings}. The remarks preceding the proof of Lemma \ref{lem:G1TorusEmbeddings} imply that these two embeddings are inequivalent, finishing the proof.  
    
\end{proof}

\appendix
\section*{Appendix. Table of toroidal embeddings of cubic projective plane obstructions}
\label{app:TableRP2ObstructionsInT2}

$E_{42}$ has no embeddings into the torus.

\vspace{.1cm}
\noindent $F_{11}$ has the following two inequivalent unlabelled embeddings into the torus.
\begin{center}
    \includegraphics[height=3cm]{Pics/F11Embeddings/F11Torus1.pdf} \hspace{.5cm}
    \includegraphics[height=3cm]{Pics/F11Embeddings/F11Torus2.pdf}  
\end{center}

\vspace{.1cm}
\noindent $F_{12}$ has the following four inequivalent unlabelled embeddings into the torus.
\begin{center}
    \includegraphics[height=3cm]{Pics/F12Embeddings/F12Torus3_446688.pdf}
    \hspace{.5cm}
    \includegraphics[height=3cm]{Pics/F12Embeddings/F12Torus4_447777.pdf}

    \includegraphics[height=2cm]{Pics/F12Embeddings/F12Torus1_555588.pdf} 
    \hspace{.5cm}
    \includegraphics[height=2cm]{Pics/F12Embeddings/F12Torus2_555588.pdf}
\end{center}

\vspace{.1cm}
\noindent $F_{13}$ has the following two inequivalent unlabelled embeddings into the torus.
\begin{center}
    \includegraphics[height=3cm]{Pics/F13Embeddings/F13Torus1_555669.pdf}
    \hspace{.5cm}
    \includegraphics[height=3cm]{Pics/F13Embeddings/F13Torus2_555777.pdf}  
\end{center}

\vspace{.1cm}
\noindent $F_{14}$ has the following two inequivalent unlabelled embeddings into the torus.
\begin{center}
    \includegraphics[height=3cm]{Pics/F14Embeddings/F14Torus1_556668.pdf}
    \hspace{.5cm}
    \includegraphics[height=3cm]{Pics/F14Embeddings/F14Torus2_5555610.pdf}  
\end{center}

\vspace{.1cm}
\noindent $G_{1}$ has the following two inequivalent unlabelled embeddings into the torus.
\begin{center}
    \includegraphics[height=3cm]{Pics/G1Embeddings/G1TorusStacked.pdf}
    \chead{.5cm}
    \includegraphics[height=3cm]{Pics/G1Embeddings/G1ToursShifted.pdf}  
\end{center}

\bibliographystyle{alpha}
\bibliography{main}

\end{document}